\def\rank{\operatorname{rank}}
\def\rmspmatrix#1{\begin{pmatrix}#1\end{pmatrix}}
\def\mapengine#1,#2.{\mapfunction{#1}\ifx\void#2\else\mapengine #2.\fi }
\def\map[#1]{\mapengine #1,\void.}
\def\mapenginesep_#1#2,#3.{\mapfunction{#2}\ifx\void#3\else#1\mapengine #3.\fi }
\def\mapsep_#1[#2]{\mapenginesep_{#1}#2,\void.}
\def\bvect[#1,#2]{
{
\def\mapfunction##1{\ | \  ##1}
	\rmspmatrix{
		 \,#1\map[#2]\,
	}
}
}
\def\bs#1{\boldsymbol{#1}}
\def\bbR{{\mathbb R}}
\begin{document}

\maketitle

\begin{abstract}
It is well known that matrices with low Hessenberg-structured displacement rank enjoy fast algorithms for certain matrix factorizations. We show how $n\times n$ principal finite sections of the Gram matrix for the orthogonal polynomial measure modification problem has such a displacement structure, unlocking a collection of fast algorithms for computing connection coefficients (as the upper-triangular Cholesky factor) between a known orthogonal polynomial family and the modified family. In general, the ${\cal O}(n^3)$ complexity is reduced to ${\cal O}(n^2)$, and if the symmetric Gram matrix has upper and lower bandwidth b, then the ${\cal O}(b^2n)$ complexity for a banded Cholesky factorization is reduced to ${\cal O}(b n)$. In the case of modified Chebyshev polynomials, we show that the Gram matrix is a symmetric Toeplitz-plus-Hankel matrix, and if the modified Chebyshev moments decay algebraically, then a hierarchical off-diagonal low-rank structure is observed in the Gram matrix, enabling a further reduction in the complexity of an approximate Cholesky factorization powered by randomized numerical linear algebra.
\end{abstract}

\begin{keywords}
Orthogonal polynomials, matrix factorizations, displacement structure.
\end{keywords}

\begin{AMS}
33C45, 33C47, 65F45, 15A24, 68W20
\end{AMS}

\section{Introduction}

Let $\lambda$ be a positive Borel measure on the real line whose support contains an infinite number of points and has finite moments:
\[
-\infty < \int_\bbR x^n{\rm\,d}\lambda(x) < \infty,\quad\forall n\in\mathbb{N}_0.
\]
Then there exists a family of orthogonal polynomials ${\bf P}(x) = (p_0(x) \quad p_1(x) \quad p_2(x) \quad \cdots)$ with respect to the inner-product:
\[
\langle f, g\rangle_\lambda = \int_\bbR f(x) g(x){\rm\,d}\lambda(x).
\]
That is, $\langle p_m, p_n\rangle_w = k_n\delta_{m,n}$, where $k_n > 0$. Moreover, if $k_n\equiv1$, then we say that ${\bf P}(x)$ are orthonormal. This inner-product induces a norm, $\|f\|_\lambda^2 = \langle f, f\rangle_\lambda$, and the Hilbert space $L^2(\bbR,{\rm\,d}\lambda)$.

Orthonormal polynomials are important in numerical analysis for their ability to represent $f\in L^2(\bbR,{\rm\,d}\lambda)$ isomorphically as an $\ell^2$ infinite vector of expansion coefficients via:
\[
f(x) = \sum_{k=0}^\infty \langle f, p_k\rangle_\lambda\,p_k(x) =: {\bf P}(x) \bs{f}.
\]
Given a known family of orthogonal polynomials, ${\bf P}(x)$, we wish to describe efficient algorithms to compute the orthonormal polynomials ${\bf Q}(x)$ in $L^2(\bbR,{\rm\,d}\mu)$. Our numerical apparatus is centred on the computation of the connection coefficients between the original and the modified families: 
\[
{\bf P}(x) = {\bf Q}(x)R.
\]
That is, the computation of the coefficients $R_{k,n}$ (the entries of the upper-triangular matrix $R$) such that:
\[
p_n(x) = \sum_{k=0}^n q_k(x)R_{k,n}.
\]

Every family of orthogonal polynomials has an irreducible infinite tridiagonal matrix (the transpose of the Jacobi matrix) that implements multiplication by $x$:
\[
x{\bf P}(x) = {\bf P}(x) X_P.
\]
If ${\bf P}(x)$ is orthonormal, then $X_P$ is symmetric. As a characterizing object, it is essential to provide methods to compute the matrix $X_Q$ in $x{\bf Q}(x) = {\bf Q}(x)X_Q$. Indeed, a result due to Gautschi~\cite{Gautschi-24-245-70} states that $RX_P = X_QR$. Since $X_P$ and $X_Q$ are both tridiagonal matrices, the computation of $X_Q$ involves only the main diagonal and the first super-diagonal of the connection coefficients~\cite{Gutleb-Olver-Slevinsky-1-24}.

\begin{definition}
The Gram matrix is defined by:
\begin{equation}\label{eq:GramMatrix}
W_P = \int_\bbR {\bf P}(x)^\top {\bf P}(x) {\rm\,d}\mu(x).
\end{equation}
\end{definition}

\begin{proposition}\label{proposition:Gram}
Properties of the Gram matrix include:
\begin{enumerate}
\item $W_P$ is symmetric and positive-definite;
\item If $W_P = R^\top R$ is a Cholesky factorization, then ${\bf P}(x) = {\bf Q}(x) R$ and the converse is true; and,
\item $X_P^\top W_P - W_PX_P = 0$.
\end{enumerate}
\end{proposition}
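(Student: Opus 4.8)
The plan is to establish the three properties in sequence, and to treat every relation entrywise so that no convergence issues arise from working with infinite matrices. Throughout I write $[W_P]_{m,n}=\langle p_m,p_n\rangle_\mu$ and use that ${\bf Q}$ is orthonormal in $L^2(\bbR,\dmu)$, i.e.\ $\int_\bbR {\bf Q}(x)^\top {\bf Q}(x)\dmu = I$.

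For item 1, symmetry is immediate from symmetry of the inner product: $[W_P]_{m,n}=\langle p_m,p_n\rangle_\mu=\langle p_n,p_m\rangle_\mu=[W_P]_{n,m}$. For positive-definiteness, take any nonzero finitely-supported vector $\bs c$ and note $\bs c^\top W_P \bs c = \int_\bbR \big({\textstyle\sum_m} c_m p_m(x)\big)^2\dmu(x)=\|{\bf P}(x)\bs c\|_\mu^2\ge 0$. Because the $p_m$ are linearly independent, ${\bf P}(x)\bs c$ is a nonzero polynomial, and since $\mu$ has infinite support it cannot vanish $\mu$-almost everywhere; hence the integral is strictly positive. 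This is the one place where the hypothesis on the support of $\mu$ is essential.

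For item 2, substituting ${\bf P}={\bf Q}R$ gives ${\bf P}^\top{\bf P}=R^\top {\bf Q}^\top {\bf Q} R$, so integrating and invoking orthonormality of ${\bf Q}$ yields $W_P=R^\top\big(\int_\bbR {\bf Q}^\top{\bf Q}\dmu\big)R=R^\top R$; since $R$ is upper-triangular with positive diagonal (fixed by the leading-coefficient conventions), this is the Cholesky factorization. Conversely, given $W_P=R^\top R$, define ${\bf Q}:={\bf P}R^{-1}$; as $R^{-1}$ is again upper-triangular this is a polynomial family with $\deg q_k=k$, and $\int_\bbR {\bf Q}^\top {\bf Q}\dmu = R^{-\top} W_P R^{-1}=I$, so ${\bf Q}$ is orthonormal. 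Uniqueness of the orthonormal family (up to signs, pinned down by the positive diagonal of $R$) identifies it with the target family, giving ${\bf P}={\bf Q}R$.

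Item 3 is the crux and the source of the displacement structure. The key observation is that multiplication by $x$ is self-adjoint on $L^2(\bbR,\dmu)$: $\langle xf,g\rangle_\mu=\langle f,xg\rangle_\mu$. Working entrywise, $[X_P^\top W_P]_{m,n}=\sum_k [X_P]_{k,m}\langle p_k,p_n\rangle_\mu=\langle \sum_k [X_P]_{k,m}p_k,\,p_n\rangle_\mu$, and the inner sum is exactly the $m$-th entry of ${\bf P}(x)X_P=x{\bf P}(x)$, so the expression equals $\langle xp_m,p_n\rangle_\mu$. Symmetrically, $[W_PX_P]_{m,n}=\langle p_m, xp_n\rangle_\mu$. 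Because $X_P$ is tridiagonal, each of these sums is finite, so no interchange of limits is required and self-adjointness of multiplication by $x$ forces the two quantities to coincide; hence $X_P^\top W_P - W_P X_P=0$. I expect the only delicate point to be confirming that these entrywise identities are literally finite sums, guaranteed by the banded (tridiagonal) structure of $X_P$, which is precisely what lets the self-adjointness argument run verbatim on the infinite matrices rather than only on finite sections.
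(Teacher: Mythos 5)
Your proposal is correct and, on the substantive Property 3, takes essentially the same route as the paper: your entrywise identity $[X_P^\top W_P]_{m,n}=\langle xp_m,p_n\rangle_\mu=\langle p_m,xp_n\rangle_\mu=[W_PX_P]_{m,n}$ is precisely the paper's two-way evaluation of $\int_\bbR {\bf P}(x)^\top x\,{\bf P}(x)\,{\rm d}\mu(x)$ (substituting $x{\bf P}={\bf P}X_P$ on the right factor versus transposing it onto the left factor), written entry by entry with the tridiagonality of $X_P$ guaranteeing finite sums. The only difference is that the paper disposes of Properties 1 and 2 by citing \cite{Gutleb-Olver-Slevinsky-1-24}, whereas you supply direct (and correct) self-contained proofs of these standard facts.
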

\begin{proof}
Properties 1 and 2 are proved in~\cite{Gutleb-Olver-Slevinsky-1-24}. The last property follows from:
\begin{align*}
\int_\bbR {\bf P}(x)^\top x{\bf P}(x) {\rm\,d}\mu(x) & = \int_\bbR {\bf P}(x)^\top {\bf P}(x) {\rm\,d}\mu(x) X_P = W_PX_P,\\
& = X_P^\top \int_\bbR {\bf P}(x)^\top {\bf P}(x) {\rm\,d}\mu(x) = X_P^\top W_P.
\end{align*}
\end{proof}
The fast algorithms in this work are a consequence of this third property.

\section{Fast Cholesky decomposition of the Gram matrix}

\begin{definition}
Let $e_n$ be the $n^{th}$ canonical basis vector.
\end{definition}

\begin{definition}
Let $P_n\in\bbR^{n\times\infty}$ denote the projection operator:
\begin{equation}\label{eq:canonicalorthogonalprojection}
P_n = \begin{pmatrix} I_n & 0\end{pmatrix},
\end{equation}
where $I_n$ is the $n\times n$ identity.
\end{definition}

\begin{definition}
The modified OP moments of $\mu$ are defined by:
\[
\bs{\mu} = \int_\bbR {\bf P}(x)^\top{\rm\,d}\mu(x).
\]
\end{definition}

\begin{theorem}[Gautschi~\cite{Gautschi-24-245-70}]\label{theorem:momentstoX}
Let the first $2n-1$ modified OP moments of $\mu$ be known. Then these uniquely define $P_n W_P P_n^\top$ and thereby $P_n R P_n^\top$ and $P_{n-1} X_Q P_{n-1}^\top$.
\end{theorem}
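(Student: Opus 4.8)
The plan is to establish the three determinations in sequence, following the dependency chain moments $\to$ Gram block $\to$ $R$ block $\to$ $X_Q$ block, where in each stage the arithmetic of polynomial degrees controls precisely how far the computation reaches into the index set.

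First, for the Gram block, I would observe that each entry $(P_n W_P P_n^\top)_{m,k} = \int_\bbR p_m(x) p_k(x){\rm\,d}\mu(x)$ with $0 \le m,k \le n-1$ involves a product $p_m p_k$ of degree $m+k \le 2n-2$. Expanding this product in the known basis ${\bf P}(x)$, whose linearization coefficients depend only on the recurrence coefficients of ${\bf P}$ encoded in $X_P$, gives $p_m p_k = \sum_{j=0}^{m+k} c_j^{(m,k)} p_j$, so that the entry equals $\sum_j c_j^{(m,k)}\bs{\mu}_j$, a fixed linear functional of the moments $\bs{\mu}_0,\ldots,\bs{\mu}_{2n-2}$. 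Hence the first $2n-1$ modified moments uniquely determine $P_n W_P P_n^\top$. I would also record the constructive alternative afforded by Proposition~\ref{proposition:Gram}: the first row and column satisfy $(W_P)_{m,0} = p_0\,\bs{\mu}_m$, and the identity $X_P^\top W_P = W_P X_P$, read entrywise with $X_P$ tridiagonal, yields a three-point recurrence propagating these boundary data across the block. This is the displacement-structure route underlying the paper's fast algorithm.

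Second, for the connection-coefficient block, I would use Property 2 of Proposition~\ref{proposition:Gram}, which identifies $W_P = R^\top R$ as the Cholesky factorization. Since $R$ is upper triangular, its first $n$ columns vanish below row $n$, so $R P_n^\top = P_n^\top(P_n R P_n^\top)$, and consequently $P_n W_P P_n^\top = (P_n R P_n^\top)^\top(P_n R P_n^\top)$ using $P_n P_n^\top = I_n$. Thus the leading $n\times n$ block of $R$ is itself the Cholesky factor of $P_n W_P P_n^\top$, and uniqueness of the Cholesky factor of a symmetric positive-definite matrix with positive diagonal determines $P_n R P_n^\top$ from the Gram block obtained in the first step. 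Third, for the Jacobi block, I would invoke Gautschi's relation $R X_P = X_Q R$: reading its entries with $X_P$ and $X_Q$ tridiagonal and $R$ upper triangular lets one solve successively for $a_k = (X_Q)_{k,k}$ and $b_k = (X_Q)_{k,k+1}$, as in~\cite{Gutleb-Olver-Slevinsky-1-24}. The decisive bookkeeping is that computing $a_k, b_k$ for $k \le n-2$ calls only on entries $R_{i,j}$ with $i,j \le n-1$, whereas $a_{n-1}$ would already require $R_{n-1,n}$, which lies outside the $n\times n$ block.

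The main obstacle I anticipate is this last index accounting: verifying that the successive solution of Gautschi's relation never reaches an $R$-entry outside the computed block, and pinning down the off-by-one that yields $P_{n-1}X_Q P_{n-1}^\top$ rather than $P_n X_Q P_n^\top$. The degree count in the first step ($m+k \le 2n-2$, hence $2n-1$ moments) together with this reach-by-one in the third step are the two places where the precise constants in the statement are decided, while the middle step is the standard leading-principal-block property of Cholesky factorization.
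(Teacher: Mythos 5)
Your proposal is correct, but it reaches the three conclusions by somewhat different means than the paper, and the comparison is instructive. For the Gram block, you argue uniqueness by degree counting: $p_m p_k$ has degree $m+k\le 2n-2$, so each entry of $P_nW_PP_n^\top$ is a fixed linear functional (via the linearization coefficients of the known family) of $\bs{\mu}_0,\ldots,\bs{\mu}_{2n-2}$. The paper instead propagates the first column $(W_P)_{m,0}=p_0\bs{\mu}_m$ through the five-term recurrence~\eqref{eq:MomentRecurrence} obtained from Property 3 of Proposition~\ref{proposition:Gram}, filling the first $2n-m$ entries of the $m^{\rm th}$ column; this is not only a uniqueness proof but also the ${\cal O}(n^2)$ fill algorithm of Remark~\ref{remark:fastGramfill} (you note this route as an aside, though what you call a ``three-point recurrence'' is the paper's five-term recurrence). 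Your argument is cleaner as a pure determination statement but presupposes access to linearization coefficients and is not the constructive engine the rest of the paper relies on. The middle step is identical in substance, and you make explicit what the paper leaves implicit, namely that $RP_n^\top = P_n^\top(P_nRP_n^\top)$ by triangularity, so the leading block of $R$ is the Cholesky factor of the leading block of $W_P$. For the Jacobi block, you solve $RX_P=X_QR$ entrywise and track which entries of $R$ each $a_k$, $b_k$ touches (your bookkeeping is right: $a_{n-2}$ needs $R_{n-2,n-1}$, still inside the block, while $a_{n-1}$ would need $R_{n-1,n}$), whereas the paper disposes of the off-by-one in a single identity,
\[
P_{n-1}X_QP_{n-1}^\top = (P_{n-1}RP_n^\top)\,(P_nX_PP_n^\top)\,(P_nR^{-1}P_{n-1}^\top),
\]
which is well-defined because $X_P$ is tridiagonal (so columns $0,\ldots,n-2$ of $X_P$ reach only row $n-1$) and $R^{-1}$ is upper triangular (so $P_nR^{-1}P_{n-1}^\top$ is computable from $P_nRP_n^\top$ alone). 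Both routes are valid; the paper's truncated similarity transform buys a one-line justification, while your entrywise solve matches how $X_Q$ is actually assembled in practice from the diagonal and superdiagonal of $R$.
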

\begin{proof}
The first column of the Gram matrix is equal to the constant $P_0(x)$ times the modified OP moments. In the standard normalization, all classical OPs satisfy $P_0(x) = 1$, but if ${\bf P}(x)$ are orthonormal, the constant may be different~\cite{NIST:DLMF}. Thus the first $2n-1$ entries of the first column of $W_P$ are known. In general, Property 3 in Proposition~\ref{proposition:Gram} defines a five-term recurrence relation about the entry $m,n$ as follows, dropping subscripts for clarity:
\begin{equation}
\begin{aligned}
& X_{m-1, m}W_{m-1, n} + X_{m, m}W_{m, n} + X_{m+1, m}W_{m+1, n}\\
& = W_{m, n+1}X_{n+1, n} + W_{m, n}X_{n, n} + W_{m, n-1}X_{n-1, n}.\label{eq:MomentRecurrence}
\end{aligned}
\end{equation}
For $n=1$, the term $W_{m, n-1}X_{n-1,n}$ is omitted. This recurrence relation allows the first $2n-1$ entries in the first column to define the first $2n-2$ entries in the second column, and more generally the first $2n-m$ entries in the $m^{\rm th}$ column. Thus $P_n W_P P_n^\top$ is defined, its Cholesky factorization exists and is unique, and the computation $P_{n-1}X_QP_{n-1}^\top = (P_{n-1}RP_n^\top) (P_nX_P P_n^\top) (P_n R^{-1} P_{n-1}^\top)$ is well-defined.
\end{proof}

\begin{remark}\label{remark:fastGramfill} Eq.~\eqref{eq:MomentRecurrence} allows for all entries in the $n\times n$ principal finite section of the Gram matrix to be computed in ${\cal O}(n^2)$ flops. If, however, every entry in the moment vector past the first $b+1$ is zero, then $W_P$ is banded with upper and lower bandwidths $b$, and the same section can be computed in ${\cal O}(bn)$ flops.
\end{remark}

Property 3 in Proposition~\ref{proposition:Gram} can be restated in finite dimensions as follows.

\begin{lemma}\label{lemma:finitedimensionalmatrixequation}
If $G = (e_n | -P_nW_Pe_{n+1}(X_P)_{n+1,n})\in\bbR^{n\times 2}$ and $J = \begin{pmatrix} 0 & 1\\ -1 & 0\end{pmatrix}$, then:
\begin{equation}\label{eq:finitedimensionalmatrixequation}
(P_n X_P^\top P_n^\top) (P_n W_P P_n^\top) - (P_nW_PP_n^\top) (P_nX_PP_n^\top) = GJG^\top.
\end{equation}
\end{lemma}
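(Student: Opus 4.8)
The plan is to derive the finite-dimensional identity as the \emph{truncation defect} of the exact infinite-dimensional commutation relation $X_P^\top W_P - W_P X_P = 0$ supplied by Property 3 of Proposition~\ref{proposition:Gram}. The starting point is to compress that identity by $P_n$ on both sides, so that $P_n(X_P^\top W_P - W_P X_P)P_n^\top = 0$, and then to reconcile the two compressed products with the products of compressions that appear on the left-hand side of~\eqref{eq:finitedimensionalmatrixequation}.

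The central device is to insert the resolution of the identity $I = P_n^\top P_n + Q$, where $Q = I - P_n^\top P_n$ is the complementary projection onto the coordinates beyond the truncation, into the interior of each product. This yields
\begin{align*}
P_n X_P^\top W_P P_n^\top &= (P_n X_P^\top P_n^\top)(P_n W_P P_n^\top) + P_n X_P^\top Q W_P P_n^\top,\\
P_n W_P X_P P_n^\top &= (P_n W_P P_n^\top)(P_n X_P P_n^\top) + P_n W_P Q X_P P_n^\top,
\end{align*}
so that subtracting and invoking the vanishing of the compressed commutator isolates the left-hand side of~\eqref{eq:finitedimensionalmatrixequation} as the difference of the two \emph{boundary terms} $P_n W_P Q X_P P_n^\top - P_n X_P^\top Q W_P P_n^\top$.

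Next I would exploit the tridiagonality of $X_P$. Because $X_P$ couples the first $n$ coordinates to the remainder through the single entry $(X_P)_{n+1,n}$, the matrices $QX_PP_n^\top$ and $P_nX_P^\top Q$ each collapse to one rank-one dyad, namely $QX_PP_n^\top = (X_P)_{n+1,n}\,e_{n+1}e_n^\top$ and $P_nX_P^\top Q = (X_P)_{n+1,n}\,e_ne_{n+1}^\top$ (reading $e_{n+1}$ in the ambient space). Substituting these and using the symmetry of $W_P$ to rewrite $e_{n+1}^\top W_P P_n^\top = (P_n W_P e_{n+1})^\top$ expresses both boundary terms through the single vector $P_n W_P e_{n+1}$ and the scalar $(X_P)_{n+1,n}$, giving $(X_P)_{n+1,n}\bigl[(P_nW_Pe_{n+1})e_n^\top - e_n(P_nW_Pe_{n+1})^\top\bigr]$.

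It then remains to confirm that this antisymmetric rank-$\le 2$ matrix is exactly $GJG^\top$. Writing $G = (e_n \mid b)$ with $b = -(X_P)_{n+1,n}P_nW_Pe_{n+1}$, a direct expansion gives $GJG^\top = e_n b^\top - b e_n^\top$, which matches the collected boundary terms. I expect the only genuine subtlety to be bookkeeping: tracking the index convention so that precisely one entry of the tridiagonal $X_P$ crosses the truncation boundary, and invoking the symmetry of $W_P$ in the right place so that both boundary dyads are carried by the \emph{same} vector $P_n W_P e_{n+1}$; once these are secured, the identification with $GJG^\top$ is a one-line verification.
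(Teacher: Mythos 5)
Your proof is correct and takes essentially the same approach as the paper: both project the infinite-dimensional identity $X_P^\top W_P = W_P X_P$ onto the first $n$ coordinates, insert a resolution of the identity, and use the tridiagonality of $X_P$ to collapse the truncation defect into rank-one dyads carried by the vector $(X_P)_{n+1,n}P_nW_Pe_{n+1}$, which are then identified with $GJG^\top$. The only cosmetic difference is that the paper first replaces the identity by $P_{n+1}^\top P_{n+1}$ (justified by tridiagonality) and splits it as $P_n^\top P_n + e_{n+1}e_{n+1}^\top$, whereas you split off the full complementary projection $Q = I - P_n^\top P_n$ first and invoke tridiagonality afterwards.
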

\begin{proof}
The skew-symmetry of the left-hand side of Eq.~\eqref{eq:finitedimensionalmatrixequation} justifies the storage of the right-hand side in skew-symmetric form. Since $X_P$ is tridiagonal, projection of the second half of the infinite-dimensional matrix equation reads:
\begin{align*}
P_nW_PX_PP_n^\top & = P_nW_PP_{n+1}^\top P_{n+1}X_PP_n^\top,\\
& = P_nW_P\left[P_n^\top P_n + \left(P_{n+1}^\top P_{n+1} - P_n^\top P_n\right) \right]X_PP_n^\top,\\
& = (P_nW_PP_n^\top) (P_nX_PP_n^\top) + (P_nW_Pe_{n+1})(P_n X_P^\top e_{n+1})^\top,\\
& = (P_nW_PP_n^\top) (P_nX_PP_n^\top) + (P_nW_Pe_{n+1})(X_P)_{n+1, n} e_n^\top.
\end{align*}
\end{proof}

\begin{remark}\label{remark:generators}
While Lemma~\ref{lemma:finitedimensionalmatrixequation} proves that the $n\times n$ principal finite section of the Gram matrix satisfies a skew-symmetric rank-$2$ displacement equation with $P_nX_PP_n^\top$, it does so by establishing an impractical form of the generators, $G$. The above formulas for $G$ require the $n+1^{\rm st}$ columns of $W_P$ and $X_P$, which may not be readily available when already working with truncations. As a remedy, we observe that the nonzero entries in the difference:
\[
(P_n X_P^\top P_n^\top) (P_n W_P P_n^\top) - (P_nW_PP_n^\top) (P_nX_PP_n^\top),
\]
live only in the last row and the last column. Thus, an alternative form for $G$ reads:
\[
G = \Big(e_n \Big| (P_nW_Pe_{n-1})(X_P)_{n-1,n} + (P_nW_Pe_n)(X_P)_{n,n} - (P_n X_P^\top P_n^\top) (P_n W_P e_n) \Big),
\]
which follows from:
\begin{align*}
& \left[(P_n X_P^\top P_n^\top) (P_n W_P P_n^\top) - (P_nW_PP_n^\top) (P_nX_PP_n^\top)\right] e_n,\\
& = (P_n X_P^\top P_n^\top) (P_n W_P e_n) - (P_nW_PP_n^\top) (P_nX_Pe_n),\\
& = (P_n X_P^\top P_n^\top) (P_n W_P e_n) - (P_nW_PP_n^\top) \left[(X_P)_{n-1,n}e_{n-1} + (X_P)_{n,n}e_n\right],\\
& = (P_n X_P^\top P_n^\top) (P_n W_P e_n) - (P_nW_Pe_{n-1})(X_P)_{n-1,n} - (P_nW_Pe_n)(X_P)_{n,n}.
\end{align*}
\end{remark}

Fortunately, much has been done~\cite{Gohberg-Kailath-Olshevsky-64-1557-95,Heining-Olshevsky-281-3-01,Kressner-Thesis-01} to analyze matrix equations with displacement structure such as Eq.~\eqref{eq:finitedimensionalmatrixequation}, and it turns out that its Cholesky factorization can be computed in ${\cal O}(n^2)$ flops instead of ${\cal O}(n^3)$. This algorithm uses the low displacement rank to perform the symmetric transformations implied by a Cholesky factorization and apply them to the matrix of {\em generators}, $G$, and to $P_nX_PP_n^\top$ rather than working directly on $P_nW_PP_n^\top$.

The following lemma is special case of~\cite[Lemma 2.1]{Heining-Olshevsky-281-3-01} applied to Sylvester-type matrix equations with tridiagonal-plus-first-row Hessenberg structure.

\begin{lemma}\label{lemma:inductionstep}
Suppose we have a Sylvester-type displacement equation for a symmetric and positive-definite $W_1\in\bbR^{n\times n}$:
\[
X_1^\top W_1 - W_1 X_1 = G_1JG_1^\top,
\]
where $G_1\in\bbR^{n\times 2}$ and $J\in\bbR^{2\times 2}$, and where $X_1\in\bbR^{n\times n}$ is an irreducible tridiagonal matrix with nonzero entries in the first row. That is:
\[
X_1 = \begin{pmatrix} \alpha & \beta e_1^\top\\ \gamma e_1 & \hat{X}_1\end{pmatrix} + e_1r_1^\top,
\]
where $\hat{X}_1\in\bbR^{(n-1)\times(n-1)}$ is a tridiagonal matrix. Furthermore, let $W_1 = \begin{pmatrix} d_1 & l_1^\top\\ l_1 & \hat{W}_1\end{pmatrix}$ and let $L_1 = \begin{pmatrix} \sqrt{d_1} & 0\\ \frac{l_1}{\sqrt{d_1}} & I\end{pmatrix}$ so that:
\[
W_1 = L_1\begin{pmatrix} 1 & 0\\ 0 & W_2\end{pmatrix}L_1^\top,
\]
is a partial Cholesky factorization, where $W_2 = \hat{W}_1 - \frac{1}{d_1}l_1l_1^\top$ is the Schur complement. If $d_1\ne 0$, then the Schur complement satisfies another Sylvester-type displacement equation of the same form:
\[
X_2^\top W_2 - W_2 X_2 = G_2JG_2^\top,
\]
where:
\[
X_2 = \hat{X}_1 + e_1r_2^\top,\qquad r_2 = -\frac{\gamma}{d_1}l_1,
\]
and if $G_1 = \begin{pmatrix} g_1\\ \hat{G}_1\end{pmatrix}$, then:
\[
G_2 = \hat{G}_1 - \frac{l_1g_1}{d_1}.
\]
\end{lemma}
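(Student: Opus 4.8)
The plan is to establish the identity by a direct conformal block computation, partitioning every matrix into a leading $1\times 1$ block and a trailing $(n-1)\times(n-1)$ block. In this partition the hypotheses read $W_1 = \begin{pmatrix} d_1 & l_1^\top\\ l_1 & \hat W_1\end{pmatrix}$, $X_1 = \begin{pmatrix} a & b^\top\\ \gamma e_1 & \hat X_1\end{pmatrix}$, and $G_1 = \begin{pmatrix} g_1\\ \hat G_1\end{pmatrix}$, where $a$ denotes the $(1,1)$ entry and $b^\top = \beta e_1^\top + \hat r_1^\top$ the trailing part of the first row; their exact values turn out to be immaterial because they cancel. The single structural feature I will exploit is that the below-diagonal coupling in the first column of $X_1$ is the rank-one vector $\gamma e_1$, supported only on the first trailing coordinate, and that $\hat W_1^\top = \hat W_1$.

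First I would expand $X_1^\top W_1 - W_1 X_1 = G_1 J G_1^\top$ block by block. The $(1,1)$ block is the scalar identity $0 = g_1 J g_1^\top$, which holds automatically since $J$ is skew-symmetric. The $(2,2)$ block gives
\[
\hat X_1^\top \hat W_1 - \hat W_1 \hat X_1 = \hat G_1 J \hat G_1^\top - b l_1^\top + l_1 b^\top,
\]
and the $(1,2)$ block gives the off-diagonal generator identity
\[
g_1 J \hat G_1^\top = a l_1^\top + \gamma e_1^\top \hat W_1 - d_1 b^\top - l_1^\top \hat X_1 .
\]
The $(2,1)$ block is the negative transpose of the $(1,2)$ block and carries no new information. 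These two identities are the only facts about the data that the argument needs.

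Next I would compute both sides of the claimed equation directly. With $X_2 = \hat X_1 + e_1 r_2^\top$ and $W_2 = \hat W_1 - d_1^{-1} l_1 l_1^\top$, the left-hand side $X_2^\top W_2 - W_2 X_2$ splits into the commutator $\hat X_1^\top \hat W_1 - \hat W_1 \hat X_1$, a Schur correction $-d_1^{-1}(\hat X_1^\top l_1 l_1^\top - l_1 l_1^\top \hat X_1)$, a first-row perturbation $r_2 e_1^\top \hat W_1 - \hat W_1 e_1 r_2^\top$, and a mixed term $-d_1^{-1}(r_2 e_1^\top l_1 l_1^\top - l_1 l_1^\top e_1 r_2^\top)$. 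On the right, expanding $G_2 J G_2^\top$ with $G_2 = \hat G_1 - d_1^{-1} l_1 g_1$ drops the quadratic cross-term $d_1^{-2} l_1 (g_1 J g_1^\top) l_1^\top$ by skew-symmetry, leaving $\hat G_1 J \hat G_1^\top$ plus two rank-one corrections built from $g_1 J \hat G_1^\top$ and $\hat G_1 J g_1^\top$. Matching proceeds term by term: the $(2,2)$-block identity replaces the commutator, the $(1,2)$-block identity rewrites the two generator corrections, the $a\, d_1^{-1} l_1 l_1^\top$ contributions cancel against each other, and the surviving piece equals the commutator-plus-Schur-correction together with $d_1^{-1}\gamma(\hat W_1 e_1 l_1^\top - l_1 e_1^\top \hat W_1)$. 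Because the coupling is $\gamma e_1$ and $r_2 = -\tfrac{\gamma}{d_1} l_1$, this is exactly the first-row perturbation from the left-hand side, while the mixed term vanishes identically since $r_2 e_1^\top l_1 l_1^\top - l_1 l_1^\top e_1 r_2^\top = (e_1^\top l_1)(r_2 l_1^\top - l_1 r_2^\top) = 0$ is the skew part of a symmetric matrix.

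I expect the only genuine obstacle to be bookkeeping: keeping the transposes, the skew action of $J$, and the symmetry $\hat W_1^\top = \hat W_1$ consistent, so that the $(1,2)$-block identity correctly converts the generator correction on the right into the matrix correction on the left. The structural reason the simple update $r_2 = -\tfrac{\gamma}{d_1} l_1$ works, and the reason the mixed term cancels, is precisely that the sub-diagonal coupling of the irreducible tridiagonal $X_1$ is rank one and supported on the first coordinate. I would close by recording that $X_2 = \hat X_1 + e_1 r_2^\top$ inherits the tridiagonal-plus-first-row structure and the irreducibility from $\hat X_1$, so the equation for $W_2$ is genuinely of the same form and the reduction may be iterated to produce the complete Cholesky factorization.
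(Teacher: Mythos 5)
Your proof is correct, but it takes a genuinely different route from the paper's. You expand the hypothesis $X_1^\top W_1 - W_1 X_1 = G_1 J G_1^\top$ into its four conformal blocks, retain the $(1,2)$ and $(2,2)$ identities (the $(1,1)$ and $(2,1)$ blocks being vacuous by skew-symmetry), then expand $X_2^\top W_2 - W_2 X_2$ and $G_2 J G_2^\top$ and match term by term; the only structural inputs are the skew-symmetry of $J$, the symmetry of $\hat{W}_1$, and the fact that the subdiagonal part of the first column of $X_1$ is $\gamma e_1$, which is exactly what forces $r_2 = -\tfrac{\gamma}{d_1} l_1$, produces the surviving piece $\tfrac{\gamma}{d_1}\bigl(\hat{W}_1 e_1 l_1^\top - l_1 e_1^\top \hat{W}_1\bigr) = r_2 e_1^\top \hat{W}_1 - \hat{W}_1 e_1 r_2^\top$, and kills the mixed term. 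The paper argues instead by congruence: it substitutes $W_1 = L_1 \diag(1, W_2) L_1^\top$, multiplies the displacement equation by $L_1^{-1}$ on the left and $L_1^{-\top}$ on the right, observes that the trailing $(n-1)\times(n-1)$ block of $L_1^\top X_1 L_1^{-\top}$ is $\hat{X}_1 - \tfrac{\gamma}{d_1} e_1 l_1^\top = X_2$ and that of $L_1^{-1} G_1$ is $G_2$, and reads off the new displacement equation as the $(2,2)$ block of the transformed one. The paper's transform-and-project argument is shorter, \emph{derives} $X_2$ and $G_2$ rather than verifying them, and exhibits the general principle (in the spirit of the cited Heinig--Olshevsky lemma) that displacement equations transform covariantly under the partial Cholesky congruence, which is what makes the recursion in Algorithm 1 natural. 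Your expand-and-match argument is more elementary --- no inverses of $L_1$ appear --- and it isolates precisely which structural facts the update formulas depend on (notably that the argument never uses tridiagonality of $\hat{X}_1$, only the rank-one, $e_1$-supported coupling); the price is heavier bookkeeping and needing the answers in advance. I checked your block identities and the cancellations (the $b l_1^\top$, $l_1 b^\top$, and $a d_1^{-1} l_1 l_1^\top$ terms, the quadratic cross-term via $g_1 J g_1^\top = 0$, and the vanishing of the mixed term because $r_2 l_1^\top$ is symmetric): the computation goes through, and your closing remark that $X_2$ inherits the irreducible tridiagonal-plus-first-row form, so the reduction iterates, completes the induction step just as in the paper.
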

\begin{proof}
The original matrix equation is equivalent to:
\[
X_1^\top L_1\begin{pmatrix} 1\\ & W_2\end{pmatrix}L_1^\top - L_1\begin{pmatrix} 1\\ & W_2\end{pmatrix}L_1^\top X_1 = G_1JG_1^\top.
\]
After applying $L_1^{-1}$ from the left and $L_1^{-\top}$ from the right, we have:
\[
L_1^{-1}X_1^\top L_1\begin{pmatrix} 1\\ & W_2\end{pmatrix} - \begin{pmatrix} 1\\ & W_2\end{pmatrix}L_1^\top X_1L_1^{-\top} = L_1^{-1}G_1JG_1^\top L_1^{-\top}.
\]
Computing the product:
\begin{align*}
L_1^\top X_1 L_1^{-\top} & = \begin{pmatrix} \sqrt{d_1} & \frac{l_1^\top}{\sqrt{d_1}}\\ 0 & I\end{pmatrix} \left[\begin{pmatrix} \alpha & \beta e_1^\top\\ \gamma e_1 & \hat{X}_1\end{pmatrix} + e_1r_1^\top\right] \begin{pmatrix} \frac{1}{\sqrt{d_1}} & -\frac{l_1^\top}{d_1}\\ 0 & I\end{pmatrix},\\
& = \begin{pmatrix} \times & \times\\ \times & \hat{X}_1-\frac{\gamma}{d_1}e_1l_1^\top\end{pmatrix},
\end{align*}
where the $\times$ indicate entries that are not of interest, the second term in the original matrix equation becomes:
\[
\begin{pmatrix} 1\\ & W_2\end{pmatrix}L_1^\top X_1L_1^{-\top} = \begin{pmatrix} \times & \times\\ \times & W_2X_2\end{pmatrix}.
\]
Finally,
\[
\begin{pmatrix} \times\\ G_2\end{pmatrix} = L_1^{-1} G_1.
\]
\end{proof}

This lemma nearly forms the full induction step from a matrix equation of size $n\times n$ to one of size $(n-1)\times(n-1)$. To begin the computations, the first column of $W_1$ is required to form the partial Cholesky factor $L_1$. Indeed, for the Gram matrix, this first column is simply proportional to the modified OP moments, but to continue the inductive process, we must also compute the first column of $W_2$. For Sylvester-type matrix equations with Hessenberg structure, this is known as the {\em second-column problem}~\cite{Heining-Olshevsky-281-3-01}, because the first column of $W_2$ may be constructed from $L_1$ and the second column of $W_1$. This second column is computed by applying the matrix equation to $e_1$:
\begin{align*}
X_1^\top W_1e_1 - W_1 X_1e_1 & = G_1JG_1^\top e_1,\\
X_1^\top W_1e_1 - W_1 \left[\begin{pmatrix} \alpha & \beta e_1^\top\\ \gamma e_1 & \hat{X}_1\end{pmatrix} + e_1r_1^\top\right]e_1 & = G_1Jg_1,
\end{align*}
or:
\[
\gamma W_1e_2 = \left[X_1^\top - (\alpha + r_1^\top e_1)\right] W_1e_1 - G_1Jg_1.
\]
Since $X_1$ is assumed to be irreducible, $\gamma\ne0$.

Algorithm~\ref{algorithm:fastCholesky} describes the fast Cholesky factorization of the principal finite section of the Gram matrix as a consequence of Lemmas~\ref{lemma:finitedimensionalmatrixequation} and~\ref{lemma:inductionstep}.

\begin{algorithm}\label{algorithm:fastCholesky}
\caption{Fast Cholesky factorization of symmetric positive-definite $W\in\bbR^{n\times n}$.}
\begin{algorithmic}
\Require $X\in\bbR^{n\times n}$ is an irreducible tridiagonal matrix and $X^\top W - W X = GJG^\top$.
\Function{Cholesky}{$W, X, G, J$}
\State $L = I_n$
\State $c = W_{:,1}$
\For {$k = 1:n-1$}
	\State $d = \sqrt{c_1}$
	\State $L_{k:n,k} = l = c / d$
	\State $\hat{c} = ((X^\top-X_{1,1}I)c - GJG_{1,:})/X_{2,1}$ \Comment{The second column.}
	\State $c = \hat{c}_{2:n-k+1} - (c_2/d) l_{2:n-k+1}$ \Comment{The next first column.}
	\State $X = X_{2:n-k+1,2:n-k+1} - (X_{2,1}/d) e_1 l_{2:n-k+1}^\top$
	\State $G = G_{2:n-k+1,:} - l_{2:n-k+1}G_{1,:}/d$
\EndFor
\State $L_{n,n} = \sqrt{c_1}$
\State \Return $L$
\EndFunction
\Ensure $W = LL^\top$.
\end{algorithmic}
\end{algorithm}

Surprisingly, Algorithm~\ref{algorithm:fastCholesky} appears symbolically identical to a fast $QR$ factorization of a Toeplitz-plus-Hankel matrix~\cite[Algorithm 2.31]{Kressner-Thesis-01} with rank-$2$ generators in this case instead of rank-$8$ generators for the $QR$ factorization. The relationship follows from the fact that the Chebyshev--Gram matrix, as described in \S~\ref{section:hierarchical}, is a special symmetric Toeplitz-plus-Hankel matrix whose defining constants are drawn from the same moment vector.

\subsection{Computing modified orthogonal polynomial moments}

By Theorem~\ref{theorem:momentstoX} and Remark~\ref{remark:fastGramfill}, entries of $P_nW_PP_n^\top$ may be computed by recurrence starting with the modified OP moments. In this section, we shall describe two strategies to compute these moments for classical OPs. That is, we shall discuss strategies in case ${\rm\,d}\mu(x) = w(x)\chi_{(a,b)}(x){\rm\,d}x$, where $\chi_I$ is the indicator function on the set $I$ and ${\bf P}(x)$ are classical orthogonal polynomials~\cite[\S 18]{NIST:DLMF} with respect to weight $w_c(x)$:
\[
\bs{\mu} = \bs{\mu}[w] = \int_a^b{\bf P}(x)^\top w(x){\rm\,d}x.
\]

We shall use the fact that the classical orthogonal polynomial weight $w_c(x)$ satisfies the Pearson differential equation $(\sigma w_c)' = \tau w_c$, where $\deg(\sigma)\le 2$ and $\deg(\tau)\le 1$, and there exists~\cite{Gutleb-Olver-Slevinsky-1-24,Olver-Slevinsky-Townsend-29-573-20} a banded differentiation matrix in which:
\[
{\cal D}{\bf P}(x) = {\bf P}'(x)D_P^{P'},
\]
banded raising (upper triangular) and lowering (lower triangular) matrices:
\[
{\bf P}(x) = {\bf P}'(x)R_P^{P'},\quad{\rm and}\quad\sigma {\bf P}'(x) = {\bf P}(x) L_{P'}^P,
\]
and a diagonal mass matrix:
\[
\int_a^b {\bf P}(x)^\top {\bf P}(x)w_c(x){\rm\,d}x = M_P.
\]
The upper bandwidth of $D_P^{P'}$ is $1$, and the upper bandwidth of $R_P^{P'}$ and the lower bandwidth of $L_{P'}^P$ are both equal to $\deg(\sigma)$.

\subsubsection{From differential equation to recurrence relation}

\begin{theorem}\label{theorem:weightODE}
Let $w(x)/w_c(x)\in L^2_{w_c}(a,b)$ and let $w$ satisfy the following differential equation:
\begin{equation}\label{eq:weightODE}
a(x)(\sigma w)' + b(x) w(x) = c(x),
\end{equation}
where $a(x)$ and $b(x)$ are polynomials with respective degrees $a$ and $b$ and the modified moments of $c$ are known. Then there exists a linear recurrence relation for the modified moments $\bs{\mu}[w]$ of length at most $\max\{2a+3, 2b+1\}$.
\end{theorem}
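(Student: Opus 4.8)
The plan is to test the differential equation~\eqref{eq:weightODE} against ${\bf P}(x)^\top$ and convert every resulting term into a \emph{banded} matrix acting on the moment vector $\bs{\mu}[w]$, so that the identity collapses to $A\,\bs{\mu}[w] = \bs{\mu}[c]$ with $A$ of small bandwidth; each row of this banded system is then literally a linear recurrence of the advertised length (no inversion of $A$ is needed for the existence claim). Integrating~\eqref{eq:weightODE} gives
\[
\int_a^b {\bf P}^\top a(x)(\sigma w)'{\rm\,d}x + \int_a^b {\bf P}^\top b(x) w{\rm\,d}x = \bs{\mu}[c].
\]
The second term is immediate: multiplication by the degree-$b$ polynomial is realized on moments through $b(x){\bf P}(x)^\top = b(X_P)^\top {\bf P}(x)^\top$, contributing $b(X_P)^\top\bs{\mu}[w]$, a matrix of symmetric bandwidth $b$.

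For the first term I would integrate by parts,
\[
\int_a^b {\bf P}^\top a(\sigma w)'{\rm\,d}x = \big[a{\bf P}^\top\sigma w\big]_a^b - \int_a^b \big(a'{\bf P}^\top + a\,{\cal D}{\bf P}^\top\big)\sigma w\,{\rm\,d}x,
\]
argue the boundary term vanishes (see below), and rewrite the two remaining integrals with the structural matrices. The piece $a'\sigma{\bf P}^\top$ is plain polynomial multiplication, $(a'\sigma)(x){\bf P}^\top = (a'\sigma)(X_P)^\top{\bf P}^\top$, of bandwidth $\deg(a'\sigma)\le a+1$. For the piece $a\sigma\,{\cal D}{\bf P}^\top$, I would compose the differentiation and lowering relations: from ${\cal D}{\bf P} = {\bf P}'D_P^{P'}$ and $\sigma{\bf P}' = {\bf P}L_{P'}^P$ we get $\sigma\,{\cal D}{\bf P} = {\bf P}\,(L_{P'}^PD_P^{P'})$, hence $\sigma\,{\cal D}{\bf P}^\top = (L_{P'}^PD_P^{P'})^\top{\bf P}^\top$, and after the leftover multiplication by $a(x)$ this integral becomes $(L_{P'}^PD_P^{P'})^\top a(X_P)^\top\bs{\mu}[w]$. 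Collecting terms yields $A = b(X_P)^\top - (a'\sigma)(X_P)^\top - (L_{P'}^PD_P^{P'})^\top a(X_P)^\top$.

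The crux of the length estimate is a bandwidth count, and the decisive point is that the differentiation matrix \emph{symmetrizes} what would otherwise be a lopsided band. Since $D_P^{P'}$ is supported on the first superdiagonal and $L_{P'}^P$ is lower triangular with lower bandwidth $\deg\sigma\le 2$, the product $L_{P'}^PD_P^{P'}$ has lower bandwidth $\le\deg\sigma-1\le 1$ and upper bandwidth $\le 1$; transposing and multiplying by $a(X_P)^\top$ (bandwidth $a$) therefore gives a matrix of symmetric bandwidth $a+1$, matching the bandwidth $\le a+1$ of $(a'\sigma)(X_P)^\top$. Hence $A$ has symmetric bandwidth $\max\{a+1,b\}$, and each row of $A\,\bs{\mu}[w]=\bs{\mu}[c]$ is a linear recurrence of length $2\max\{a+1,b\}+1 = \max\{2a+3,2b+1\}$; the degenerate cases $a=0$ or $\deg\sigma<2$ only shrink the band and preserve this upper bound.

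The step I expect to be the main obstacle is the rigorous justification that the boundary term $[a{\bf P}^\top\sigma w]_a^b$ vanishes, which is exactly where the hypothesis $w/w_c\in L^2_{w_c}(a,b)$ enters. I would write $\sigma w = (w/w_c)(\sigma w_c)$ and use that, for each classical family, $\sigma w_c\to 0$ at the (finite or infinite) endpoints, with the $L^2_{w_c}$ control on $w/w_c$ simultaneously guaranteeing that the modified moments $\bs{\mu}[w]=\int_a^b{\bf P}^\top (w/w_c)\,w_c$ are finite by Cauchy--Schwarz and that the polynomially-weighted product $a\,{\bf P}^\top\sigma w$ decays at the endpoints, so the evaluated term drops out and the banded identity is exact.
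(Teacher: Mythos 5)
Your algebraic skeleton is correct, and it is a genuinely different route from the paper's. The paper never integrates by parts: it uses the hypothesis $w/w_c\in L^2_{w_c}(a,b)$ to expand $w(x) = w_c(x){\bf P}(x)\bs{w}$, pushes the derivative through this expansion with the Pearson relation $(\sigma w_c)'=\tau w_c$, and arrives at $w_c(x){\bf P}(x)\{a(X_P)[L_{P'}^PD_P^{P'}+\tau(X_P)]+b(X_P)\}\bs{w}=c(x)$, which after projection by the mass matrix gives the banded system $M_P\{\cdots\}M_P^{-1}\bs{\mu}[w]=\bs{\mu}[c]$; no boundary terms ever appear. You instead test the ODE against ${\bf P}(x)^\top$ and integrate by parts, obtaining the adjoint-type banded operator $b(X_P)^\top-(a'\sigma)(X_P)^\top-(L_{P'}^PD_P^{P'})^\top a(X_P)^\top$ acting directly on $\bs{\mu}[w]$. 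In fact these are the same matrix: using $M_Pq(X_P)M_P^{-1}=q(X_P)^\top$ for any polynomial $q$ and the self-adjointness identity $M_PL_{P'}^PD_P^{P'}+M_P\tau(X_P)=-(L_{P'}^PD_P^{P'})^\top M_P$, your operator coincides with the paper's. Your bandwidth count ($\deg(a'\sigma)\le a+1$; $L_{P'}^PD_P^{P'}$ tridiagonal; hence bandwidth $\max\{a+1,b\}$ and recurrence length $\max\{2a+3,2b+1\}$) is right. What your route buys is that the recurrence lands directly on the moments, with no mass-matrix conjugation and no series expansion of $w$; what it costs is exactly the boundary terms.

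That cost is where the genuine gap sits. You justify $\big[a{\bf P}^\top\sigma w\big]_a^b=0$ by asserting that the $L^2_{w_c}$ control on $w/w_c$ guarantees pointwise decay of $a\,{\bf P}^\top\sigma w$ at the endpoints. That inference is false: membership in $L^2_{w_c}$ gives no pointwise control near the endpoints (an $L^2_{w_c}$ function can have spikes on which $(w/w_c)\,\sigma w_c$ stays bounded away from zero arbitrarily close to an endpoint), so your sketch, which nowhere invokes the differential equation, cannot close this step. The repair must use the ODE itself: where $a(x)\ne0$, Eq.~\eqref{eq:weightODE} makes $\sigma w$ absolutely continuous with $(\sigma w)'=(c-bw)/a$, so $\sigma w$ has a one-sided limit at a finite endpoint; if $\sigma$ vanishes there, a nonzero limit would force $w\sim\mathrm{const}/\sigma$, contradicting the integrability of $w$ (which \emph{does} follow from the $L^2$ hypothesis via Cauchy--Schwarz); hence the limit is zero. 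At an infinite endpoint the problem is harder still, since ${\bf P}^\top a$ grows polynomially and you need superpolynomial decay of $\sigma w$ -- again obtainable only by combining the ODE's exponential-times-algebraic asymptotics with the $L^2$ constraint, not from $L^2$ alone. Finally, note that in the cases this theorem is designed for (e.g., the weight of Eq.~\eqref{eq:modifiedchebyshevweight}, with $a(x)=\prod_i(t_i+x)$ vanishing inside the interval where $w$ is singular), $(\sigma w)'$ is not locally integrable at the interior zeros of $a$, so the integration by parts must additionally be split at those points and the interior boundary terms shown to cancel, using $a(t_i)=0$ and continuity of $\sigma w$. The paper's expansion route is immune to all of these issues, which is precisely its advantage.
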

\begin{proof}
The $2$-norm convergent series $w(x) = w_c(x){\bf P}(x) \bs{w}$ also encodes a relationship between the modified moments $\bs{\mu}[w]$ and the coefficients $\bs{w}$. Multiplying by ${\bf P}(x)^\top$ and integrating, we find:
\[
\bs{\mu}[w] = M_P\bs{w}.
\]
Inserting this series into the differential equation:
\begin{align*}
& a(x){\cal D}\left[\sigma w_c{\bf P}(x)\bs{w}\right] + b(x)w_c(x){\bf P}(x)\bs{w}\\
& = a(x)w_c(x)\left\{\sigma {\bf P}'(x)D_P^{P'}\bs{w} + \tau(x){\bf P}(x)\bs{w}\right\} + b(x)w_c(x){\bf P}(x)\bs{w}\\
& = a(x)w_c(x){\bf P}(x)\left[L_{P'}^PD_P^{P'} + \tau(X_P)\right]\bs{w} + b(x)w_c(x){\bf P}(x)\bs{w}\\
& = w_c(x){\bf P}(x)\left\{a(X_P)\left[L_{P'}^PD_P^{P'} + \tau(X_P)\right] + b(X_P)\right\}\bs{w} = c(x).
\end{align*}
The recurrence is thus:
\[
M_P\left\{a(X_P)\left[L_{P'}^PD_P^{P'} + \tau(X_P)\right] + b(X_P)\right\}M_P^{-1}\bs{\mu}[w] = \bs{\mu}[c].
\]
\end{proof}

Table~\ref{table:weightODEs} illustrates two classical weights and many nonclassical weights that satisfy such a differential equation, which, when coupled with a sufficient number of initial conditions, leads to a finite-length recurrence relation for the modified moments. In this table, we use the shorthands $\displaystyle\bs{t}+x = \prod_{i=1}^k t_i+x$ and $\displaystyle|\bs{t}+x|^{\bs{\gamma}} = \prod_{i=1}^k |t_i+x|^{\gamma_i}$ to denote weights with $k$ algebraic factors.

\begin{table}[htp]
\caption{A collection of weights satisfying Eq.~\eqref{eq:weightODE}. In the upper part, $\sigma(x) = 1-x^2$ and in the lower part, $\sigma(x) = x$.}
\begin{center}
\begin{tabular}{c|c}
$w(x)$ & Differential equation\\
\hline
\rule{0pt}{3ex} $(1-x)^\alpha(1+x)^\beta$ & $(\sigma w)' + \left[\alpha-\beta+(\alpha+\beta+2)x\right]w = 0$\\
\rule{0pt}{3ex} $\log\left(\frac{2}{1-x}\right)(1-x)^\alpha(1+x)^\beta$ & $(\sigma w)' + \left[\alpha-\beta+(\alpha+\beta+2)x\right]w = (1-x)^\alpha(1+x)^{\beta+1}$\\
\rule{0pt}{3ex} $|\bs{t}+x|^{\bs{\gamma}}$ & $\displaystyle (\bs{t}+x)w' = w\sum_{i=1}^k\gamma_i\prod_{\substack{j=1\\j\ne i}}^k t_j+x$\\
\hline
\rule{0pt}{3ex} $x^\alpha e^{-x}$ & $(\sigma w)' + (x-\alpha-1)w = 0$\\
\rule{0pt}{3ex} $\log(t+x)x^\alpha e^{-x}$ & $(t+x)(\sigma w)' + (t+x)(x-\alpha-1)w = x^{\alpha+1}e^{-x}$\\
\rule{0pt}{3ex} $|\bs{t}+x|^{\bs{\gamma}}e^{-x}$ & $\displaystyle (\bs{t}+x)(w'+w) = w\sum_{i=1}^k\gamma_i\prod_{\substack{j=1\\j\ne i}}^k t_j+x$\\
\hline
\end{tabular}
\end{center}
\label{table:weightODEs}
\end{table}

Rather than being exhaustive, Table~\ref{table:weightODEs} is intended to be exemplary: algebraic powers satisfy a first-order linear differential equation with polynomial coefficients, logarithmic factors require inhomogeneous linear differential equations, but in Table~\ref{table:weightODEs} the moments of the inhomogeneities also satisfy recurrence relations. The differential equations are written as compactly as possible, and sometimes not exactly in the form of Eq~\eqref{eq:weightODE}, but by $(\sigma w)' = \sigma w' + \sigma'w$, it is easy to convert between representations.

\subsubsection{Simple function approximation}

Not every weight satisfies a differential equation with the conditions outlined above. Therefore, we present a computational framework based on simple function approximation.

\begin{definition}
Let $a = x_0 < x_1 < \cdots < x_n = b$ be a partition of the interval $[a,b]$ into a disjoint union of measurable sets. Then $s(x)$ is a simple function on $[a,b]$ if it is constant on every open subinterval of the partition. That is:
\[
s(x) = \sum_{k=1}^n s_k\chi_{(x_{k-1},x_k)}(x).
\]
\end{definition}

\begin{theorem}[Theorem 10.19 in~\cite{Apostol-81}]
Let $f\in L^1(I)$. For every $\epsilon>0$, there exists a simple function $s$ such that:
\[
\norm{f-s}_1 < \epsilon.
\]
\end{theorem}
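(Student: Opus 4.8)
The plan is to prove that the paper's ``simple'' (i.e.\ step) functions are dense in $L^1(I)$ by a three-stage reduction: from a general integrable $f$ to a nonnegative one, from a nonnegative integrable function to a measurable simple function, and finally from a measurable simple function to a genuine step function. First I would write $f = f^+ - f^-$ with $f^\pm = \max\{\pm f, 0\} \ge 0$, both integrable; since $\norm{f - s}_1$ is controlled by the sum of the errors in approximating $f^+$ and $f^-$ separately, it suffices, after halving the tolerance, to treat the case $f \ge 0$.

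For nonnegative $f \in L^1(I)$ I would invoke the canonical dyadic construction of an increasing sequence of measurable simple functions $\phi_m \uparrow f$ pointwise, quantizing $f$ to the levels $k/2^m$ on the sets $\{k/2^m \le f < (k+1)/2^m\}$ and capping at height $m$. Because $\phi_m \ge 0$ increases to $f$, the Monotone Convergence Theorem, equivalently domination of $0 \le f - \phi_m \le f$ by the integrable $f$, gives $\norm{f - \phi_m}_1 = \int_I (f - \phi_m) \to 0$, so some $\phi := \phi_m$ satisfies $\norm{f - \phi}_1 < \epsilon/2$.

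The remaining and least routine step is to replace the measurable simple function $\phi = \sum_i c_i \chi_{E_i}$, a finite combination of indicators of finite-measure sets, by a bona fide step function. Here I would use the regularity of Lebesgue measure: each $E_i$ admits a finite union of disjoint bounded intervals $U_i$ with $m(E_i \triangle U_i)$ as small as desired, whence $\norm{\chi_{E_i} - \chi_{U_i}}_1 = m(E_i \triangle U_i)$. Each $\chi_{U_i}$ is a step function, so $s := \sum_i c_i \chi_{U_i}$ is a step function, and choosing the set-approximation errors small enough forces $\norm{\phi - s}_1 < \epsilon/2$. The triangle inequality then yields $\norm{f - s}_1 < \epsilon$.

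I expect the interval-approximation step to be the main obstacle, since it is the only place where the distinction between the paper's step-function notion of ``simple'' and the usual measure-theoretic notion actually matters: approximating a measurable set of finite measure by finitely many intervals rests on outer regularity together with the fact that open subsets of $\bbR$ are countable disjoint unions of intervals, of which a finite subunion already captures almost all of the measure. If $I$ is unbounded, one first replaces $f$ by the truncation $f\chi_{[-N,N]}$, whose $L^1$ distance to $f$ vanishes as $N \to \infty$. Finally, I would note that because Apostol constructs the Lebesgue integral itself out of step functions, a shorter alternative is simply to read off the approximating sequence from the definition of integrability; the self-contained argument above is preferable for not depending on that particular development.
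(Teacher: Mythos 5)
Your argument is correct, but there is no internal proof to compare it against: the paper states this result as a quotation of Theorem 10.19 in Apostol's \emph{Mathematical Analysis} and uses it as a black box, so the relevant comparison is with Apostol's own proof. There the routes genuinely differ. Apostol builds the Lebesgue integral out of step functions: in his development $f\in L^1(I)$ means by definition that $f=u-v$ for upper functions $u,v$, each the a.e.\ limit of an increasing sequence of step functions with convergent integrals, so density of step functions follows almost immediately from the definition plus monotone convergence --- this is precisely the ``shorter alternative'' you flag in your last paragraph. Your proof instead works in the standard measure-theoretic framework and recovers the statement by the three-stage reduction: splitting into $f^{\pm}$, dyadic approximation by measurable simple functions (with the correct observation that their nonzero level sets have finite measure because $f$ is integrable, and that dominated/monotone convergence gives $L^1$ convergence), and then Littlewood's first principle --- outer regularity plus the decomposition of open sets into countably many intervals --- to replace each indicator $\chi_{E_i}$ by the indicator of a finite union of bounded intervals, which is where the gap between measure-theoretic simple functions and the paper's step-function notion of ``simple'' is actually closed; the truncation $f\chi_{[-N,N]}$ handles unbounded $I$. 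Both the structure and the estimates are sound (only trivial bookkeeping of the $\epsilon/2$ factors across the two reductions needs tightening). What your route buys is a self-contained proof independent of Apostol's particular construction of the integral; what the paper's citation buys is brevity, since in Apostol's framework the theorem is nearly definitional.
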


\begin{theorem}\label{theorem:firstsimple}
Let $w$ be a simple function on $[a,b]$:
\[
w(x) = \sum_{k=1}^n w_k\chi_{(x_{k-1},x_k)}(x).
\]
Then:
\[
\bs{\mu}[w] = \left[(D_P^{P'})^+R_P^{P'}\right]^\top \sum_{k=1}^nw_k \left[{\bf P}(x_k)-{\bf P}(x_{k-1})\right]^\top.
\]
\end{theorem}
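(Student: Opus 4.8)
The plan is to reduce the computation of $\bs{\mu}[w]$ to a single antiderivative of the polynomial row vector ${\bf P}(x)$ and then apply the fundamental theorem of calculus on each piece of the partition. First I would use linearity of the integral and the fact that $w$ is constant on each open subinterval to write
\[
\bs{\mu}[w]^\top = \int_a^b {\bf P}(x)w(x)\,{\rm d}x = \sum_{k=1}^n w_k\int_{x_{k-1}}^{x_k}{\bf P}(x)\,{\rm d}x,
\]
so that the entire statement rests on evaluating the subinterval integrals $\int_{x_{k-1}}^{x_k}{\bf P}(x)\,{\rm d}x$ in closed form.

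The key idea is to exhibit the explicit antiderivative ${\bf P}(x)A$, where $A = (D_P^{P'})^+ R_P^{P'}$. Differentiating and using the differentiation relation ${\cal D}{\bf P}(x) = {\bf P}'(x)D_P^{P'}$ gives
\[
\frac{{\rm d}}{{\rm d}x}\big[{\bf P}(x)A\big] = {\bf P}'(x)\,D_P^{P'}(D_P^{P'})^+ R_P^{P'},
\]
and I would then collapse the middle factor $D_P^{P'}(D_P^{P'})^+$ to the identity and recognize ${\bf P}'(x)R_P^{P'} = {\bf P}(x)$ via the raising relation, so that ${\bf P}(x)A$ is genuinely an antiderivative of ${\bf P}(x)$. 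The fundamental theorem of calculus then yields $\int_{x_{k-1}}^{x_k}{\bf P}(x)\,{\rm d}x = [{\bf P}(x_k)-{\bf P}(x_{k-1})]A$; substituting this into the sum above and transposing produces exactly the claimed formula. The constant of integration implicit in the choice of antiderivative is immaterial, since it cancels in every difference ${\bf P}(x_k)-{\bf P}(x_{k-1})$.

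I expect the main obstacle to be justifying the identity $D_P^{P'}(D_P^{P'})^+ = I$. Because $D_P^{P'}$ annihilates constants it is singular --- which is precisely why the Moore--Penrose pseudoinverse, rather than an ordinary inverse, appears in $A$ --- and in general $D_P^{P'}(D_P^{P'})^+$ is only the orthogonal projector onto $\operatorname{range}(D_P^{P'})$. The task is therefore to show that this range is the whole space. This follows from the stated structure of $D_P^{P'}$: having upper bandwidth one together with a vanishing first column (since $p_0$ is constant), it acts as a weighted shift $e_n\mapsto d_n e_{n-1}$ for $n\ge1$ with every $d_n\ne0$, whence its range contains each $e_m$ and it has full row rank. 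I would make both the pseudoinverse and the resulting identity precise in the infinite-dimensional setting by working over the graded space of polynomials and observing that all the operators involved are banded, so the identity holds on each finite principal truncation and extends entrywise to the limit.
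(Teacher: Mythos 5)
Your proposal is correct and follows essentially the same route as the paper: both reduce the claim to evaluating $\int_{x_{k-1}}^{x_k}{\bf P}(x)\,{\rm d}x$ using the raising relation ${\bf P}(x) = {\bf P}'(x)R_P^{P'}$, the differentiation relation ${\cal D}{\bf P}(x) = {\bf P}'(x)D_P^{P'}$, and the identity $D_P^{P'}(D_P^{P'})^+ = I$, the only cosmetic difference being that you verify the antiderivative ${\bf P}(x)(D_P^{P'})^+R_P^{P'}$ by differentiation whereas the paper substitutes these relations directly under the integral sign before invoking the fundamental theorem of calculus. Your added justification that $D_P^{P'}(D_P^{P'})^+$ is the identity --- because the weighted-shift structure of the classical differentiation matrix gives it full column space despite its nontrivial kernel --- is sound and makes explicit a step the paper leaves implicit.
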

\begin{proof}
Let $(D_P^{P'})^+$ be the Moore--Penrose pseudoinverse of the classical orthogonal polynomial differentiation matrix $D_P^{P'}$~\cite{Gutleb-Olver-Slevinsky-1-24}. Then:
\[
\int_{\alpha}^\beta {\bf P}(x){\rm\,d}x = \left[{\bf P}(\beta)-{\bf P}(\alpha)\right](D_P^{P'})^+ R_P^{P'}.
\]
The identity follows from:
\[
\int_{\alpha}^\beta {\bf P}(x){\rm\,d}x = \int_{\alpha}^\beta {\bf P}'(x){\rm\,d}xR_P^{P'} = \int_{\alpha}^\beta {\cal D}{\bf P}(x){\rm\,d}x(D_P^{P'})^+ R_P^{P'}.
\]
\end{proof}

Theorem~\ref{theorem:firstsimple} requires the simple function be $0$ for sufficiently large $x$ on unbounded intervals, causing the approximation of the Gram matrix to be semi-definite --- which can be a numerical problem for Cholesky factorization. Therefore, we next describe a variant of this process useful on unbounded intervals.
\begin{theorem}\label{theorem:unboundedsimple}
Let $w$ be the pointwise product of a classical weight and a simple function on $[a,b]$:
\[
w(x) = w_c(x)\sum_{k=1}^n w_k\chi_{(x_{k-1},x_k)}(x).
\]
Then:
\[
\bs{\mu}[w] = (D_P^{P'})^+ \sum_{k=1}^nw_k \left[\sigma(x_{k-1})w_c(x_{k-1}){\bf P}'(x_{k-1})-\sigma(x_k)w_c(x_k){\bf P}'(x_k)\right]^\top.
\]
\end{theorem}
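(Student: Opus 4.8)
By linearity of the integral, $\bs{\mu}[w] = \sum_{k=1}^n w_k\int_{x_{k-1}}^{x_k}{\bf P}(x)^\top w_c(x){\rm\,d}x$, so it suffices to evaluate $\int_\alpha^\beta{\bf P}(x)^\top w_c(x){\rm\,d}x$ on a single subinterval $[\alpha,\beta]$ and then read off the telescoping sum. As in the proof of Theorem~\ref{theorem:firstsimple}, the plan is to produce an explicit antiderivative of the integrand and apply the fundamental theorem of calculus. The weight $w_c$ now forces the natural antiderivative to be the weighted derivative-family vector $\sigma(x)w_c(x){\bf P}'(x)$ rather than a combination built from $(D_P^{P'})^+R_P^{P'}$.

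The heart of the proof is the antiderivative identity ${\cal D}[\sigma w_c{\bf P}'] = -w_c{\bf P}(D_P^{P'})^\top$. Expanding by the product rule and invoking the Pearson equation $(\sigma w_c)' = \tau w_c$ gives $w_c^{-1}{\cal D}[\sigma w_c{\bf P}'] = \tau{\bf P}' + \sigma\,{\cal D}{\bf P}'$, a polynomial row vector, which therefore expands in the basis ${\bf P}$. Identifying its coefficient matrix as $-(D_P^{P'})^\top$ is the key point: it expresses that the co-derivative $\hat p\mapsto w_c^{-1}{\cal D}[\sigma w_c\hat p]$ is the negative adjoint of ${\cal D}$. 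I would prove this by integration by parts, $\langle{\cal D}p_n,\hat p_m\rangle_{\sigma w_c} = -\langle p_n, w_c^{-1}{\cal D}[\sigma w_c\hat p_m]\rangle_{w_c}$, whose boundary term vanishes because $\sigma w_c$ tends to zero at the endpoints of the classical weight; comparison with ${\cal D}{\bf P} = {\bf P}'D_P^{P'}$ then fixes the transpose, with the diagonal mass matrices accounting for any non-orthonormal scaling.

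Transposing the identity gives $\frac{d}{dx}[\sigma w_c({\bf P}')^\top] = -(D_P^{P'})\,w_c{\bf P}^\top$. Applying $(D_P^{P'})^+$ on the left and integrating over $[\alpha,\beta]$ yields
\[
(D_P^{P'})^+(D_P^{P'})\int_\alpha^\beta w_c{\bf P}^\top{\rm\,d}x = (D_P^{P'})^+\left[\sigma(\alpha)w_c(\alpha)({\bf P}'(\alpha))^\top - \sigma(\beta)w_c(\beta)({\bf P}'(\beta))^\top\right],
\]
and summing the boundary contributions over the partition reproduces the stated right-hand side.

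The step I expect to be the main obstacle is the interaction of $(D_P^{P'})^+$ with the kernel of $D_P^{P'}$. Because ${\cal D}$ annihilates the constant $p_0$, the first column of $D_P^{P'}$ vanishes, so $(D_P^{P'})^+(D_P^{P'})$ is the orthogonal projection suppressing the zeroth coordinate rather than the identity. Consequently the argument recovers every modified moment beyond the constant one directly, and one must treat the zeroth modified moment---which is simply the total mass of $w$---separately and verify that the remaining components agree with the pseudoinverse solution.
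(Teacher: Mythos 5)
Your argument is, in substance, the paper's argument: the paper's entire proof consists of quoting the identity $(-{\cal D})[\sigma(x)w_c(x){\bf P}'(x)] = w_c(x){\bf P}(x)(D_P^{P'})^\top$ from~\cite{Gutleb-Olver-Slevinsky-1-24}, integrating it over each subinterval, and applying the pseudoinverse. Your integration-by-parts derivation of that identity --- product rule plus the Pearson equation, boundary terms killed because $\sigma w_c$ vanishes at the endpoints of the classical weight's support, diagonal mass matrices absorbing normalization --- correctly supplies what the paper only cites, so up to that point the two proofs coincide.

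Where you genuinely add something is your final paragraph, and your worry is not a technicality to be smoothed over: it is a real defect in the displayed formula, which the paper's proof steps over silently. Since ${\cal D}$ annihilates constants, the first column of $D_P^{P'}$ is zero, so $(D_P^{P'})^+D_P^{P'} = I - e_1e_1^\top$ (the projection you describe), while on the other side $D_P^{P'}(D_P^{P'})^+ = I$. The boundary data appearing in the theorem equals $D_P^{P'}\bs{\mu}[w]$, and since $D_P^{P'}e_1 = 0$ this data is unchanged if the zeroth moment is altered arbitrarily --- it contains no information about $\mu_0[w]$ whatsoever. Correspondingly, the first row of $(D_P^{P'})^+$ vanishes, so the leading entry of the theorem's right-hand side is identically $0$, whereas the leading entry of $\bs{\mu}[w]$ is $p_0\int_a^b w(x)\,{\rm d}x > 0$. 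The stated identity therefore holds in every entry except the first, and no manipulation of the right-hand side can repair this; the zeroth moment (the total mass, scaled by $p_0$) must be computed separately, exactly as you propose. Note the contrast with Theorem~\ref{theorem:firstsimple}, where the pseudoinverse enters through ${\bf P}'(x) = {\cal D}{\bf P}(x)(D_P^{P'})^+$, which only uses $D_P^{P'}(D_P^{P'})^+ = I$ and is therefore exact in every component. In short: your proof is correct, it follows the paper's route, and the ``obstacle'' you flagged is an actual erratum for the theorem as stated rather than a gap in your argument.
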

\begin{proof}
The transpose classical orthogonal polynomial differentiation matrix satisfies~\cite{Gutleb-Olver-Slevinsky-1-24}:
\[
(-{\cal D})[\sigma(x)w_c(x){\bf P}'(x)] = w_c(x){\bf P}(x) (D_P^{P'})^\top.
\]
It follows that:
\[
\int_{\alpha}^\beta {\bf P}(x)w_c(x){\rm\,d}x = \left[{\bf P}'(\alpha)\sigma(\alpha)w_c(\alpha) - {\bf P}'(\beta)\sigma(\beta)w_c(\beta)\right]\left[(D_P^{P'})^\top\right]^+.
\]
\end{proof}

\section{Numerical experiments}

Our numerical experiments are conducted on an iMac Pro (Early 2018) with a $2.3$ GHz Intel Xeon W-2191B with $128$ GB $2.67$ GHz DDR4 RAM. Our algorithms are available in Julia in {\tt FastTransforms.jl}~\cite{Slevinsky-GitHub-FastTransformsjl}.

\subsection{A single algebraic factor on $(-1,1)$}

In this example, we consider orthogonal polynomials with respect to the weight:
\[
w(x) = \frac{1}{\sqrt{1+\delta-x}},\quad{\rm for}\quad \delta>0.
\]
This problem is related to the half-range Chebyshev polynomials~\cite{Huybrechs-47-4326-10} but the inclusion of the parameter $\delta$ makes it a more interesting benchmark. The weight is analytic on $(-1,1)$ and its Legendre series is known in closed form~\cite[\S 18.12.11]{NIST:DLMF}:
\[
w(x) = \sqrt{\frac{2}{\rho}}\sum_{n=0}^\infty \frac{P_n(x)}{\rho^n}.
\]
Here, $\rho = 1+\delta+\sqrt{\delta^2+2\delta} > 1$ is the Bernstein ellipse parameter on which the singularity of the weight lives. By~\cite[Theorem 2.14]{Gutleb-Olver-Slevinsky-1-24}, suffice it to consider the truncation of the Legendre series as a proxy for the weight itself: given an $\epsilon > 0$, such as $\epsilon_{\rm mach} \approx 2.22\times10^{-16}$, if:
\[
b = \log_\rho\left(\frac{\sqrt{\delta}}{\epsilon}\frac{\sqrt{2/\rho}}{\rho-1}\right),
\]
then:
\[
\norm{w - \sqrt{\frac{2}{\rho}}\sum_{n=0}^b \frac{P_n(x)}{\rho^n}}_\infty \le \epsilon \norm{w}_\infty,
\]
and $w$ is to all intents and purposes a degree-$b$ polynomial. Moreover, using this polynomial results in a Legendre--Gram matrix with the same $\epsilon$-bound on the relative induced $2$-norm. This problem will allow us to distinguish the ${\cal O}(bn)$ algorithms in this work from the ${\cal O}(b^2n)$ algorithms of~\cite{Gutleb-Olver-Slevinsky-1-24}. Figure~\ref{fig:legendregenerating} illustrates the differences in error growth and calculation time between both algorithms. As the eigenvalues of the Legendre--Gram matrix are contained in the interval $[(2+\delta)^{-\frac{1}{2}},\delta^{-\frac{1}{2}}]$, corresponding to the extrema of $w(x)$, the $2$-norm condition number of the finite sections does not exhibit $n$-dependent growth. The mild algebraic growth observed in the error in the displacement algorithm, therefore, is an indication that it is less stable than a direct Cholesky factorization, consistent with previous experiments~\cite{Gohberg-Kailath-Olshevsky-64-1557-95}.

\begin{figure}[htbp]
\begin{center}
\begin{tabular}{cc}
\includegraphics[width=0.48\textwidth]{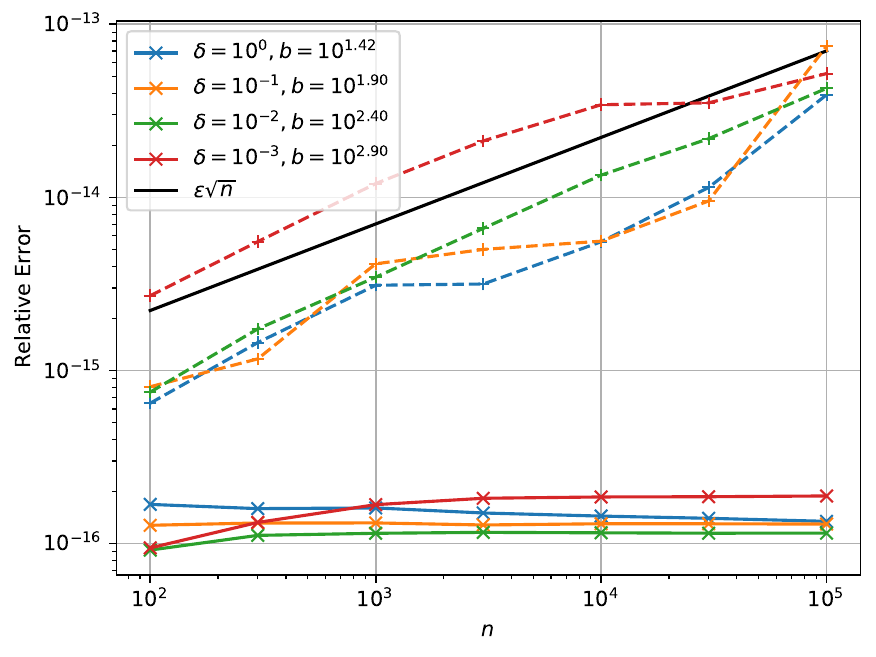}&
\includegraphics[width=0.48\textwidth]{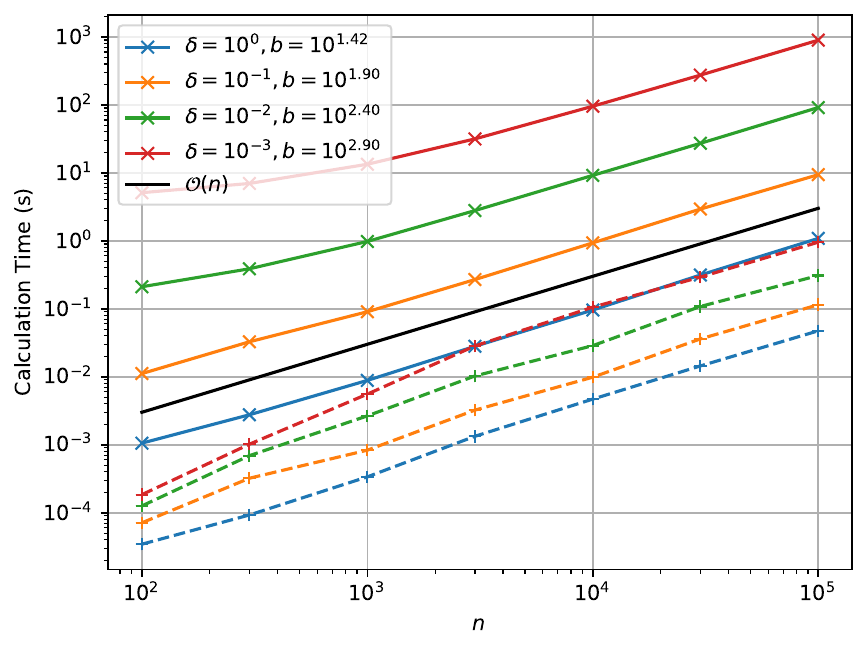}\\
\end{tabular}
\caption{Comparison of performance between the ${\cal O}(b^2n)$ algorithms of~\cite{Gutleb-Olver-Slevinsky-1-24} (solid lines) and the ${\cal O}(bn)$ algorithms in the present work (dashed lines). In both plots, the dashed lines correspond to the same $\delta$ value as the solid lines of the same colour. Left: the relative error measured in the Frobenius norm $\norm{P_nW_PP_n^\top - (P_nRP_n^\top)^\top P_nRP_n^\top}_F/\norm{P_nW_PP_n^\top}_F$. Right: calculation times of the construction of the Legendre--Gram matrix and its Cholesky factorization for four different values of $\delta$. The difference in the bandwidth scalings results in gaps between the solid lines being twice as wide as those between the dashed lines.}
\label{fig:legendregenerating}
\end{center}
\end{figure}

\subsection{Multiple algebraic factors on $(-1,1)$}\label{subsection:modifiedchebyshev}

In~\cite{Ellison-Julien-489-112268-23,Papadopoulos-et-al-46-A3448-24}, orthogonal polynomials are constructed with respect to algebraic weights with singularities at the endpoints and off the interval $(-1,1)$. While algebraic singularities off the interval can be well approximated by polynomials and rationals, when an integrable singularity is inside the interval, the methodology of~\cite{Gutleb-Olver-Slevinsky-1-24} breaks down. Here, we find orthogonal polynomials with respect to the weight:
\begin{equation}\label{eq:modifiedchebyshevweight}
w(x) = |x-\tfrac{1}{2}|^{-\frac{1}{2}}|x-\tfrac{1}{4}|^{-\frac{1}{4}}|x+\tfrac{1}{4}|^{\frac{1}{4}}|x+\tfrac{1}{2}|^{\frac{1}{2}}.
\end{equation}
Our algorithm is as follows:
\begin{enumerate}
\item We subdivide the unit interval as $[-1,-\tfrac{1}{2}]\cup[-\tfrac{1}{2},-\tfrac{1}{4}]\cup[-\tfrac{1}{4},\tfrac{1}{4}]\cup[\tfrac{1}{4},\tfrac{1}{2}]\cup[\tfrac{1}{2},1]$.
\item On each subinterval, we approximate the weight function with an algebraic endpoint-weighted mapped Chebyshev series. Each of these series converges geometrically by analyticity of $w(x)$ divided by the algebraic powers with singularities at the endpoints of each respective subinterval.
\item We compute the first four modified Chebyshev moments using this numerical approximation of $w(x)$.
\item We use Theorem~\ref{theorem:weightODE} to construct an at most nine-term recurrence relation for subsequent moments (that is degenerately $m$-term up to the $m^{\rm th}$ moment for $m<9$).
\item We fill the Chebyshev--Gram matrix by Theorem~\ref{theorem:momentstoX}, and Algorithm~\ref{algorithm:fastCholesky} computes the connection from Chebyshev polynomials to the desired ${\bf Q}(x)$.
\end{enumerate}

Figure~\ref{fig:modifiedchebyshev} illustrates the results of this algorithm by plotting the relative error in the computation of the first $10,000$ modified Chebyshev moments and by synthesizing the degree-$500$ polynomial.

\begin{figure}[htbp]
\begin{center}
\begin{tabular}{cc}
\includegraphics[width=0.48\textwidth]{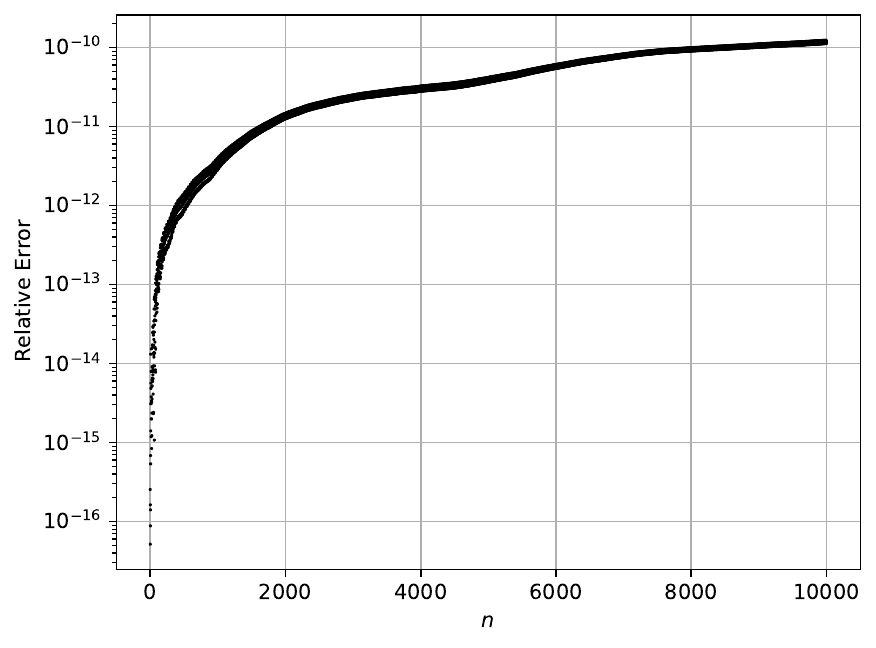}&
\includegraphics[width=0.48\textwidth]{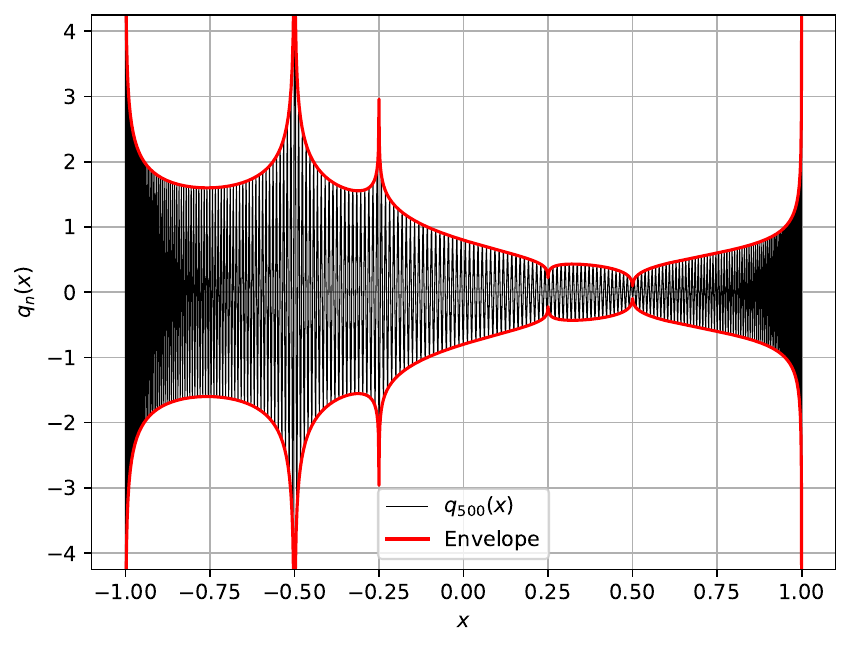}\\
\end{tabular}
\caption{A weight with multiple algebraic factors on $(-1,1)$. Left: the relative error in the modified Chebyshev moments. This is calculated by comparison with an extended precision calculation using MPFR~\cite{Fousse-et-al-33-13-1-07} following the same steps. Right: synthesis of $q_{500}(x)$ and the Szeg\H o evelope~\cite[Theorem 12.1.4]{Szego-75}.}
\label{fig:modifiedchebyshev}
\end{center}
\end{figure}

For this particular weight, the locations of the singularities are known in advance. More generally, the edge detection technique of~\cite{Pachon-Platte-Trefethen-30-898-10} can be used to determine the subdivision of the interval on-the-fly. In such a scenario, the differential structure may also be missing, but so long as there are a finite number of subintervals, the complexity of computing the first $2n$ moments would cost approximately ${\cal O}(n^2\log n)$ by the fast inverse discrete cosine transform (iDCT)~\cite{Frigo-Johnson-93-216-05}. Modified Chebyshev moments (and Jacobi more generally) can also be viewed within the framework of oscillatory integrals~\cite{Deano-Huybrechs-Iserles-17}. It is thus possible to develop rigorous asymptotic expansions that are more accurate for large degrees, facilitating an ${\cal O}(n)$ calculation the first $2n$ moments.

\subsection{A simple modified exponential on $\bbR^+$}

We close this section with the modified Laguerre weight:
\[
w(x) = \left[4,096\chi_{[0,4)}(x) + \chi_{[4,\infty)}(x)\right]e^{-x},
\]
and a word of caution. When working on unbounded domains, multiplication by $x$ is an unbounded matrix. This makes the calculation of the Gram matrix exceptionally difficult because both the Clenshaw algorithm~\cite{Clenshaw-9-118-55,Gutleb-Olver-Slevinsky-1-24} and the Gram matrix recurrence are unstable with unbounded multiplication matrices even when the Gram matrix is well-conditioned. Nevertheless, with an accurate Gram matrix in hand, its Cholesky factorization proceeds naturally in double precision. Here, we use Theorem~\ref{theorem:unboundedsimple} to compute the moments with respect to the standard Laguerre polynomials. In Table~\ref{table:degreetobits}, we report the number of bits required using MPFR's extended precision arithmetic~\cite{Fousse-et-al-33-13-1-07} to ensure that the $n\times n$ principal finite section of the Laguerre--Gram matrix agrees with the same matrix calculated using double the number of bits to $\epsilon_{\rm mach} \approx 2.22\times10^{-16}$ in the relative Frobenius norm. The number of bits are all multiples of $64$, consistent with the storage structure of the MPFR number type.

\begin{table}[htp]
\caption{Bits for an accurate Laguerre--Gram matrix $P_nW_LP_n^\top$.}
\begin{center}
\begin{tabular}{c|cccccc}
\hline
$n$ & $32$ & $64$ & $128$ & $256$ & $512$ & $1,024$\\
\hline
bits & $192$ & $256$ & $448$ & $896$ & $1,664$ & $3,328$\\
\hline
\end{tabular}
\end{center}
\label{table:degreetobits}
\end{table}

As can be seen in Table~\ref{table:degreetobits}, approximately three times as many bits are required as the truncation size. This is eventually problematic, and so we make another observation. With two accurate final rows and columns of $P_nW_LP_n^\top$, the recurrence relation in Eq.~\eqref{eq:MomentRecurrence} is stable in the downward direction. Suffice it to rapidly compute the entries in these last two columns! One might be tempted to linearize the product of Laguerre polynomials~\cite{Askey-131-70,Askey-Gasper-31-48-77}:
\[
L_m(x)L_n(x) = \sum_{k=|m-n|}^{m+n} \sum_{j\ge0}\frac{j!(-1)^{k+m+n}2^{k+m+n-2j}}{(j-k)!(j-m)!(j-n)!(k+m+n-2j)!}L_k(x).
\]
However, the factorial growth and alternation in sign in the coefficients combine to cause subtractive cancellation when used to express the final columns of the Laguerre--Gram matrix in terms of the modified Laguerre moments. It is possible that a fast Laguerre transform could be used to compute these coefficients in the right complexity; this is left for future exploration.

While this connection matrix is well-conditioned, with singular values contained in the interval $[1, 64]$, the eigenvalues suggest that the modified orthogonal polynomials multiplied by the square root of the weight will be localized to the interval $[0,4]$ at low degree or will treat the weight almost like the Laguerre weight at high degree, with a transition region in between. This phenomenon is plotted in Figure~\ref{fig:modifiedlaguerre}.

\begin{figure}[htbp]
\begin{center}
\begin{tabular}{cc}
\includegraphics[width=0.48\textwidth]{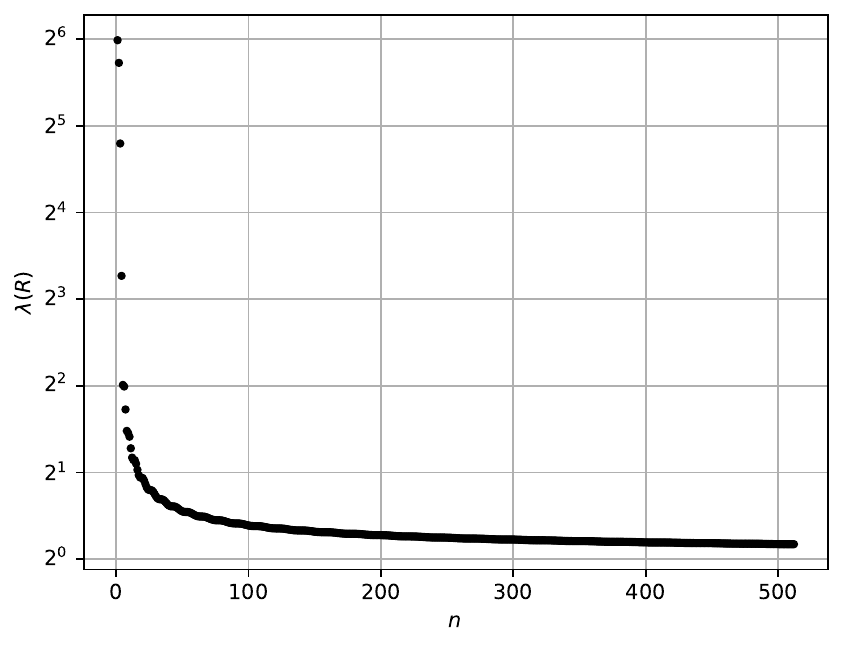}&
\includegraphics[width=0.48\textwidth]{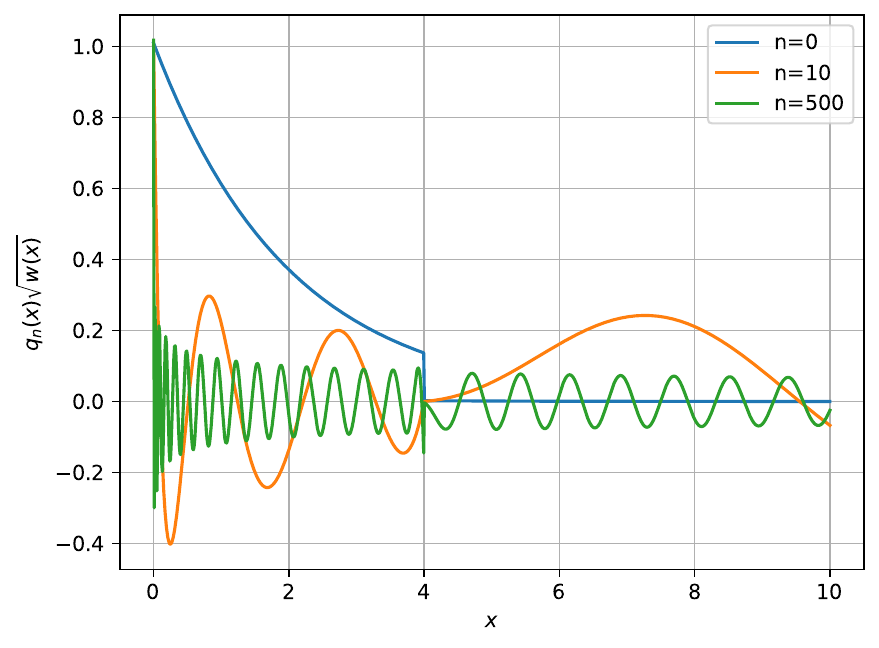}\\
\end{tabular}
\caption{Modified Laguerre polynomials. Left: the eigenvalues of the connection matrix $P_{512}RP_{512}^\top$. Right: three of the modified orthogonal polynomials multiplied by $\sqrt{w(x)}$. At low degree, this function is highly localized to the interval $[0,4]$, whereas at high degree, the function is not so different from $e^{-x/2}L_n(x)$, with a phase shift in the tail.}
\label{fig:modifiedlaguerre}
\end{center}
\end{figure}

\section{Hierarchical Cholesky factorization of the Gram matrix}\label{section:hierarchical}

We shall utilize the algorithm of Benner and Mach~\cite{Benner-Mach-439-1150-13} to provide accelerated matrix-vector products and solutions of linear systems with the connection coefficients under the assumption that the associated Gram matrix has a hierarchical off-diagonal low rank structure. If $W\in\bbR^{n\times n}$ is a symmetric positive definite matrix, it is an ${\cal H}_\ell$-matrix of rank $r$, denoted by ${\cal H}_\ell(r)$, if it can be expressed as:
\begin{equation}\label{eq:HODLRGram}
W = \begin{pmatrix} W_{11} & U\Sigma V^\top\\ V\Sigma^\top U^\top & W_{22}\end{pmatrix},
\end{equation}
where $\rank(U\Sigma V^\top) = r$ and $W_{11}$ and $W_{22}$ are ${\cal H}_{\ell-1}(r)$-matrices. In this context, $\ell$ refers to the number of levels in the hierarchical data structure that are used to store the matrix. With a conformably partitioned upper-triangular Cholesky factor, $R\in\bbR^{n\times n}$, the ${\cal H}_\ell(r)$-matrix structure is {\em not} preserved under Cholesky factorization:
\[
\begin{pmatrix} W_{11} & U\Sigma V^\top\\ V\Sigma U^\top & W_{22}\end{pmatrix} = \begin{pmatrix} R_{11}^\top\\ R_{12}^\top & R_{22}^\top\end{pmatrix}\begin{pmatrix} R_{11} & R_{12}\\ & R_{22}\end{pmatrix} = \begin{pmatrix} R_{11}^\top R_{11} & R_{11}^\top R_{12}\\ R_{12}^\top R_{11} & R_{12}^\top R_{12} + R_{22}^\top R_{22}\end{pmatrix}.
\]
While the Cholesky factorization of $W_{11}$ proceeds recursively, and it is plain to see that $\rank(R_{12}) \le r$, the Schur complement in the bottom right corner causes $R_{22}$ to be the Cholesky factorization of the rank-$r$ downdate of an ${\cal H}_{\ell-1}(r)$-matrix. Nevertheless, the Cholesky factor of an ${\cal H}_\ell(r)$-matrix is an ${\cal H}_\ell(r\ell)$-matrix~\cite[\S 3.4]{Benner-Mach-439-1150-13}, and compression of low-rank matrices in each level can keep the numerical ranks in the Cholesky factor as low as possible. By using the hierarchical matrix arithmetic~\cite{Borm-Grasedyck-Hackbusch-27-405-03}, it can be seen that the Cholesky factorization of an ${\cal H}_\ell(r)$-matrix costs ${\cal O}(r^2 n \log^2 n)$ flops and matrix-vector products with $R$ cost ${\cal O}(r n \log^2 n)$ flops. As will be seen, a mild $n$-dependent growth in the rank, such as $r = {\cal O}(\log n)$, can be included while preserving the quasi-linear asymptotic complexity of the Cholesky factorization. In this case, the complexity of the factorization is ${\cal O}(n \log^4n)$ and the matrix-vector products with $R$ cost ${\cal O}(n \log^3 n)$. We shall make one final observation. With compression of low-rank matrices at every step, it is often the case in our numerical experiments that the Cholesky factor is in fact an ${\cal H}_\ell(\hat{r})$-matrix, where $\hat{r}$ is more comparable to $r$ than to $r\ell$. If this assumption is made, then the complexity of the factorization more closely resembles ${\cal O}(n \log^3n)$ and the matrix-vector products then cost only ${\cal O}(n\log^2n)$ flops.

To use this hierarchical approach, we must approximate finite sections of the Gram matrix by Eq.~\eqref{eq:HODLRGram}. This question can be answered practically and theoretically. In the practical sense, our implementation uses randomized numerical linear algebra~\cite{Liberty-et-al-104-20167-07,Halko-Martinsson-Tropp-53-217-11,Martinsson-Tropp-29-403-20} to compute, with high probability, low-rank approximations to the off-diagonal blocks in Eq.~\eqref{eq:HODLRGram} in the form of a partial singular value decomposition (SVD). The following theorem is but one of a collection of results describing how these algorithms work~\cite[\S 10]{Halko-Martinsson-Tropp-53-217-11}.

\begin{theorem}[Theorem 10.7 in~\cite{Halko-Martinsson-Tropp-53-217-11}] Let $A\in\bbR^{m\times n}$. For a rank-$r$ approximation and an oversampling parameter $p\ge4$, such that $r+p\le\min\{m,n\}$, let $\Omega\in\bbR^{n\times(r+p)}$ be a standard Gaussian sampling matrix. From the factorization $A\Omega = QR$, for all $u,t\ge1$:
\[
\|A-QQ^\top A\|_F \le \left(1+t\sqrt{\frac{3r}{p+1}}\right)\sqrt{\sum_{j>r}\sigma_j(A)^2} + ut\frac{e\sqrt{r+p}}{p+1}\sigma_{r+1}(A),
\]
with failure probability at most $2t^{-p} + e^{-u^2/2}$.
\end{theorem}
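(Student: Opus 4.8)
The plan is to split the argument into a deterministic structural estimate that holds for \emph{every} draw of $\Omega$, followed by a probabilistic analysis that controls the random quantities it contains. Write a thin singular value decomposition $A = U\Sigma V^\top$ and separate the spectrum into the leading $r$ singular values and the tail, so that $\Sigma = \diag(\Sigma_1,\Sigma_2)$ with $\Sigma_1\in\bbR^{r\times r}$ and correspondingly $V = \begin{pmatrix} V_1 & V_2\end{pmatrix}$. Define the two blocks $\Omega_1 = V_1^\top\Omega\in\bbR^{r\times(r+p)}$ and $\Omega_2 = V_2^\top\Omega$. Since $Q$ has orthonormal columns spanning the range of $A\Omega$, the matrix $QQ^\top$ is the orthogonal projector onto that range, and $\norm{(I-QQ^\top)A}_F$ is the quantity to bound.

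First I would establish the deterministic bound
\[
\norm{(I-QQ^\top)A}_F^2 \le \norm{\Sigma_2}_F^2 + \norm{\Sigma_2\Omega_2\Omega_1^+}_F^2,
\]
valid whenever $\Omega_1$ has full row rank, which holds almost surely for Gaussian $\Omega$. This is the structural heart of the argument: because $QQ^\top$ is the \emph{optimal} projector onto $\mathrm{range}(A\Omega)$, its residual is dominated by that of any matrix sharing the same range; writing $A\Omega = U\begin{pmatrix}\Sigma_1\Omega_1 \\ \Sigma_2\Omega_2\end{pmatrix}$ and flattening against $\Omega_1^+$ in the orthonormal $U$-coordinates produces an identity block on the top $r$ rows and leaves the tail contaminated only by the factor $\Sigma_2\Omega_2\Omega_1^+$. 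This reduces the whole problem to controlling $\norm{\Sigma_2\Omega_2\Omega_1^+}_F$.

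For the probabilistic step I would exploit rotational invariance of the standard Gaussian measure: since $V$ is orthogonal, $\Omega_1$ and $\Omega_2$ are again independent standard Gaussian matrices. Conditioning on $\Omega_1$, hence on $B := \Omega_1^+$, the map $\Omega_2\mapsto\norm{\Sigma_2\Omega_2 B}_F$ is Lipschitz with constant $\norm{\Sigma_2}\norm{B}$ and has expectation at most $\norm{\Sigma_2}_F\norm{B}_F$, so Gaussian concentration yields
\[
\norm{\Sigma_2\Omega_2\Omega_1^+}_F \le \norm{\Sigma_2}_F\norm{\Omega_1^+}_F + u\,\norm{\Sigma_2}\,\norm{\Omega_1^+}
\]
outside an event of conditional probability at most $e^{-u^2/2}$. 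It then remains to insert the sharp tail bounds for the pseudoinverse of the $r\times(r+p)$ Gaussian matrix $\Omega_1$, namely $\norm{\Omega_1^+}_F\le t\sqrt{3r/(p+1)}$ and $\norm{\Omega_1^+}\le t\,e\sqrt{r+p}/(p+1)$, each failing with probability at most $t^{-p}$, the operator-norm bound in fact with the smaller $t^{-(p+1)}$. Substituting these, using $\sqrt{a^2+b^2}\le a+b$ to pass from the squared deterministic estimate, and identifying $\norm{\Sigma_2}_F = \sqrt{\sum_{j>r}\sigma_j(A)^2}$ together with $\norm{\Sigma_2} = \sigma_{r+1}(A)$, reproduces exactly the stated inequality; a union bound over the three exceptional events gives the failure probability $e^{-u^2/2}+2t^{-p}$.

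I expect the deterministic bound to be the conceptual crux: splitting the residual cleanly into an unavoidable tail $\norm{\Sigma_2}_F$ plus a perturbation governed solely by $\Omega_2\Omega_1^+$ requires the suboptimality argument and careful bookkeeping in the $U$-basis. The technical crux, by contrast, is the pair of Gaussian pseudoinverse tail estimates, since matching the precise constants $\sqrt{3r/(p+1)}$ and $e\sqrt{r+p}/(p+1)$ rests on moment and concentration bounds for inverse-Wishart-type quantities, which I would cite rather than rederive.
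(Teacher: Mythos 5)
Your proposal is correct, and it is essentially the proof given in the cited source: the paper itself states this result without proof (it is quoted verbatim as Theorem 10.7 of Halko--Martinsson--Tropp), and your reconstruction follows that original argument step for step --- the deterministic bound $\|(I-QQ^\top)A\|_F^2 \le \|\Sigma_2\|_F^2 + \|\Sigma_2\Omega_2\Omega_1^+\|_F^2$ (their Theorem 9.1), conditional Gaussian concentration for the Lipschitz map $\Omega_2\mapsto\|\Sigma_2\Omega_2\Omega_1^+\|_F$, the two pseudoinverse tail bounds with failure probabilities $t^{-p}$ and $t^{-(p+1)}$, and the final union bound with $\sqrt{a^2+b^2}\le a+b$. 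Nothing further is needed.
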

A reduced $QR$ factorization can be converted into one of several other factorizations, including the partial SVD. A crucial observation of Xia and Gu~\cite[Proposition 2.1]{Xia-Gu-31-2899-10} states that when computing $R_{22}$ by $R_{22}^\top R_{22} = W_{11} - R_{12}^\top R_{12}$, the approximation $R_{22}$ is {\em guaranteed to exist} when approximating the off-diagonal blocks of $P_nW_PP_n^\top$ by a partial SVD. This is because the downdating step with a partial SVD is even more positive-definite than the full downdate with the dense off-diagonal block. In general, therefore, randomized algorithms will require ${\cal O}(n^2)$ flops to approximate $P_nW_PP_n^\top \approx W$ as an ${\cal H}_\ell(r)$-matrix. But there is at least one family of classical orthogonal polynomials for which this complexity can be reduced. By the Chebyshev linearization formula~\cite[\S 18.18.21]{NIST:DLMF}, $2T_m(x)T_n(x) = T_{m+n}(x) + T_{|m-n|}(x)$, entries of the Chebyshev--Gram matrix are the average of two modified Chebyshev moments:
\begin{equation}\label{eq:ChebyshevGram}
2(W_T)_{m,n} = \mu_{m+n}[w] + \mu_{|m-n|}[w].
\end{equation}
By the structure of the indexing of the modified moments, we would call $W_T$ a symmetric Toeplitz-plus-Hankel matrix, a property preserved under canonical orthogonal projection. By the fast Fourier transform (FFT)~\cite{Frigo-Johnson-93-216-05}, matrix-vector products with any $m\times n$ contiguous subblock of the Chebyshev--Gram matrix cost ${\cal O}(N\log N)$ flops, where $N = \max\{m,n\}$. This allows the randomized numerical linear algebra to produce low-rank approximations of contiguous subblocks in ${\cal O}(r n\log^2n)$ flops.

For a theoretical understanding, that is, a rigorous bound on the numerical rank of the off-diagonal subblocks, the symmetric Toeplitz-plus-Hankel structure of $W_T$ is also useful. Certain modified Chebyshev moments are known in closed form. By the popularity of the Clenshaw--Curtis quadrature rule, it is well known~\cite{Trefethen-50-67-08} that:
\begin{equation}\label{eq:ClenshawCurtismoments}
\mu_n[1] = \left\{\begin{array}{ccc} \frac{2}{1-n^2} & {\rm for} & n{\rm~even,}\\0 & {\rm for} & n{\rm~odd.}\end{array}\right.
\end{equation}
Analysis of these modified moments would reveal the rank structure behind the Chebyshev--Legendre transform, which was discovered by Alpert and Rokhlin~\cite{Alpert-Rokhlin-12-158-91} and refined by Keiner~\cite{Keiner-31-2151-09}. For a nonclassical transform, we will consider instead the log-Chebyshev weight~\cite[Eq.~(65)]{Slevinsky-Olver-332-290-17}:
\[
\mu_n\left[\log\left(\frac{2}{1-x}\right)\frac{1}{\sqrt{1-x^2}}\right] = \left\{\begin{array}{ccc} 2\pi\log2 & {\rm for} & n=0,\\ \frac{\pi}{n} & \multicolumn{2}{c}{\rm otherwise.}\end{array}\right.
\]
It can be shown by Theorem~\ref{theorem:algebraicmomentsequalslowrank} that the off-diagonal blocks have numerical rank ${\cal O}(\log n)$, confirmed in Figure~\ref{fig:hierarchicalcholesky}, where the numerical ranks in all subblocks of the Chebyshev--Gram matrix and its Cholesky factor are illustrated. Consequently, the error and calculation times of Figure~\ref{fig:hierarchicaltimeanderror} are to be expected.

\begin{figure}[htbp]
\begin{center}
\begin{tabular}{cc}
\includegraphics[width=0.475\textwidth]{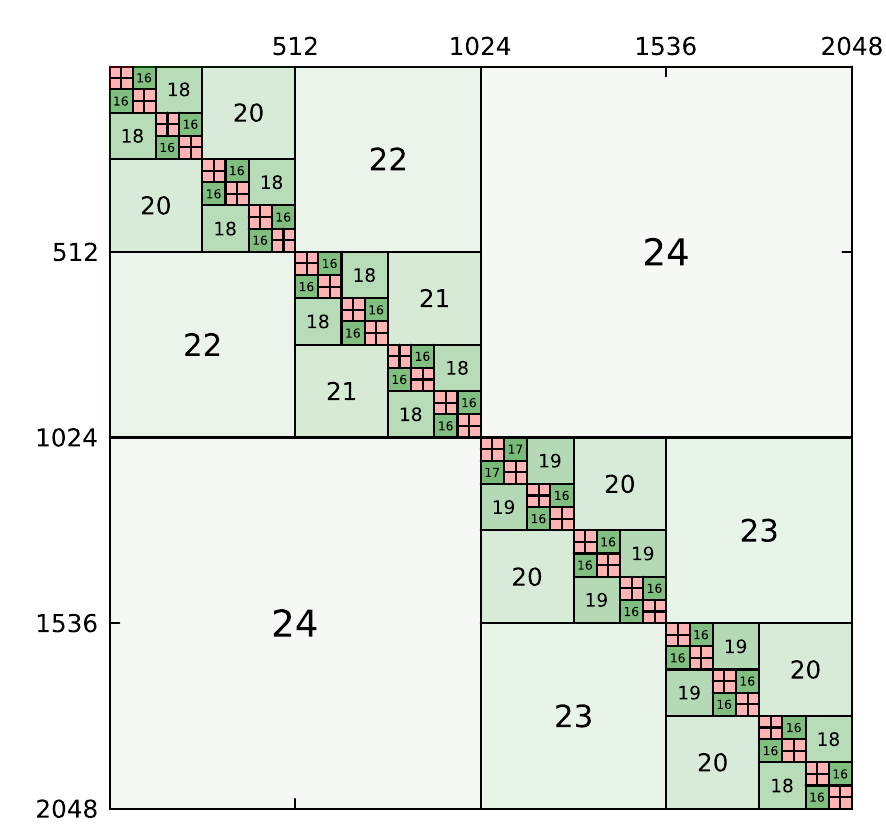}&
\includegraphics[width=0.475\textwidth]{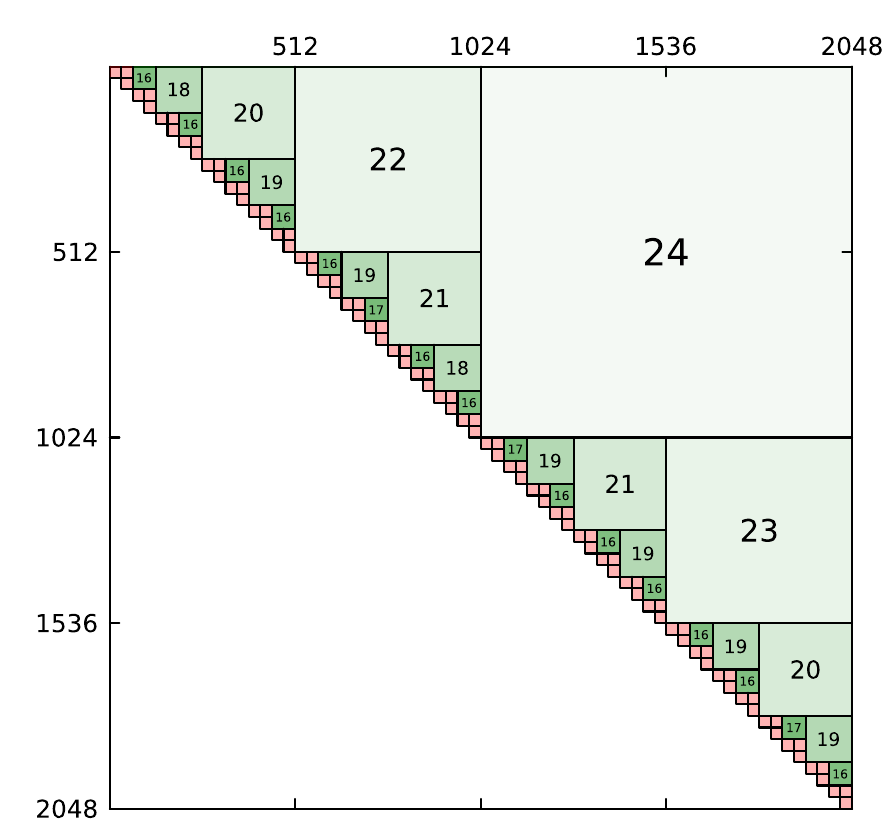}\\
\end{tabular}
\caption{Left: Numerical ranks of the hierarchical approximation of the principal finite section of the Chebyshev--Gram matrix for the log-Chebyshev weight $w(x) = \log\left(\dfrac{2}{1-x}\right)\dfrac{1}{\sqrt{1-x^2}}$. Right: numerical ranks of the corresponding Cholesky factor. In both panels, green blocks indicate low-rank approximations and red indicates that dense blocks are used. The opacity $\alpha\in(0,1)$ of a green block of size $m\times n$ and numerical rank $r$ is proportional to its data-sparsity compared to a dense filling: $\alpha mn = (m+n)r$.}
\label{fig:hierarchicalcholesky}
\end{center}
\end{figure}

\begin{figure}[htbp]
\begin{center}
\begin{tabular}{cc}
\includegraphics[width=0.48\textwidth]{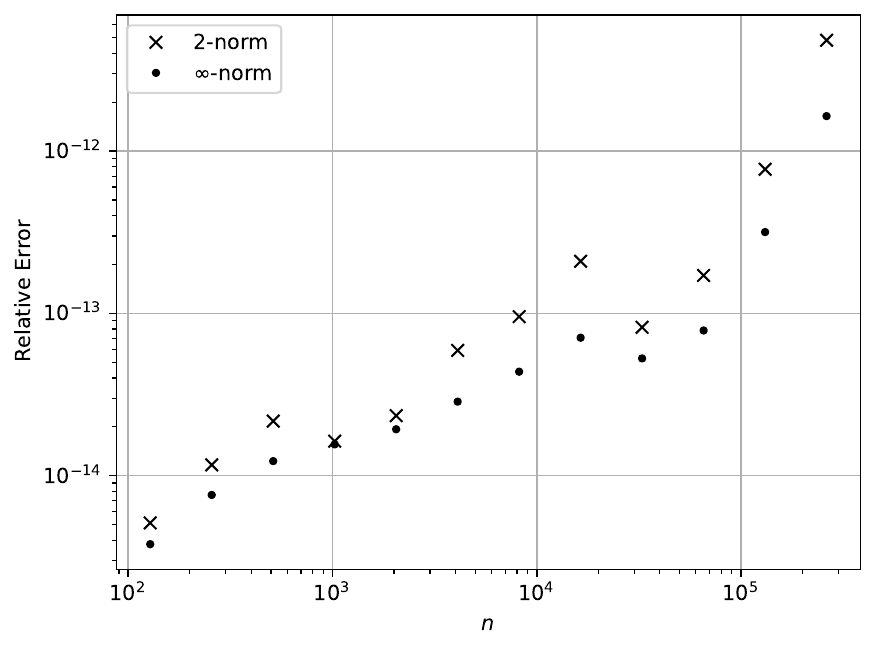}&
\includegraphics[width=0.48\textwidth]{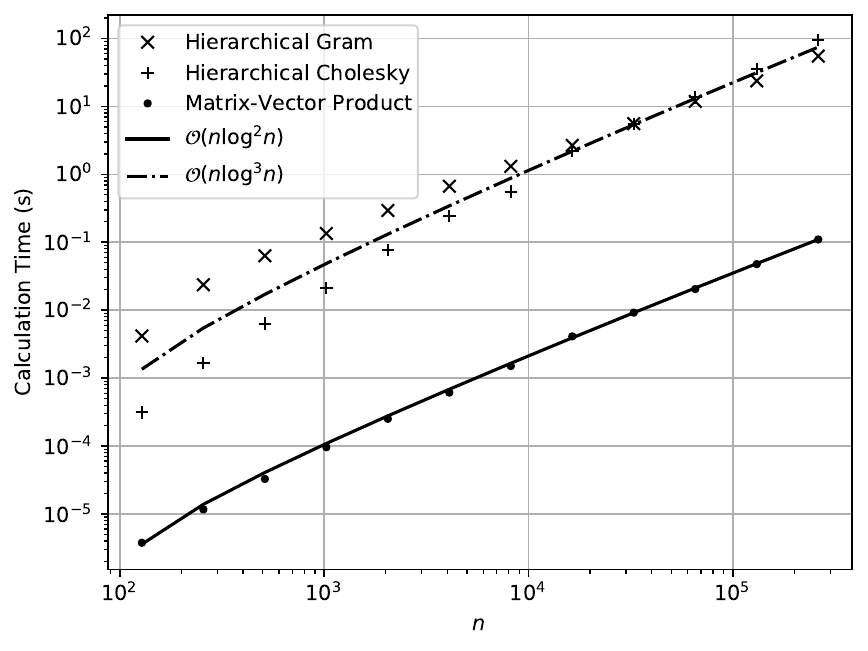}\\
\end{tabular}
\caption{Conversion of a degree-$n-1$ expansion in log-Chebyshev OPs with standard normally distributed pseudorandom coefficients to Chebyshev polynomials. Left: $2$-norm and $\infty$-norm relative error in the forward and backward transformation. Right: calculation times of the hierarchical construction of the Chebyshev--Gram matrix, its hierarchical Cholesky factorization, and the matrix-vector product with the hierarchical Cholesky factor.}
\label{fig:hierarchicaltimeanderror}
\end{center}
\end{figure}

While we are not aware of a universal proof that all modified Chebyshev polynomials can be well-approximated by an ${\cal H}_\ell$-matrix approach, we return to the example of \S~\ref{subsection:modifiedchebyshev} to illustrate through Figure~\ref{fig:hierarchicalcholeskybis} that this particular Chebyshev--Gram matrix also has a logarithmic growth in the ranks of the off-diagonal subblocks, a growth that is nearly preserved under Cholesky factorization.

\begin{figure}[htbp]
\begin{center}
\begin{tabular}{cc}
\includegraphics[width=0.475\textwidth]{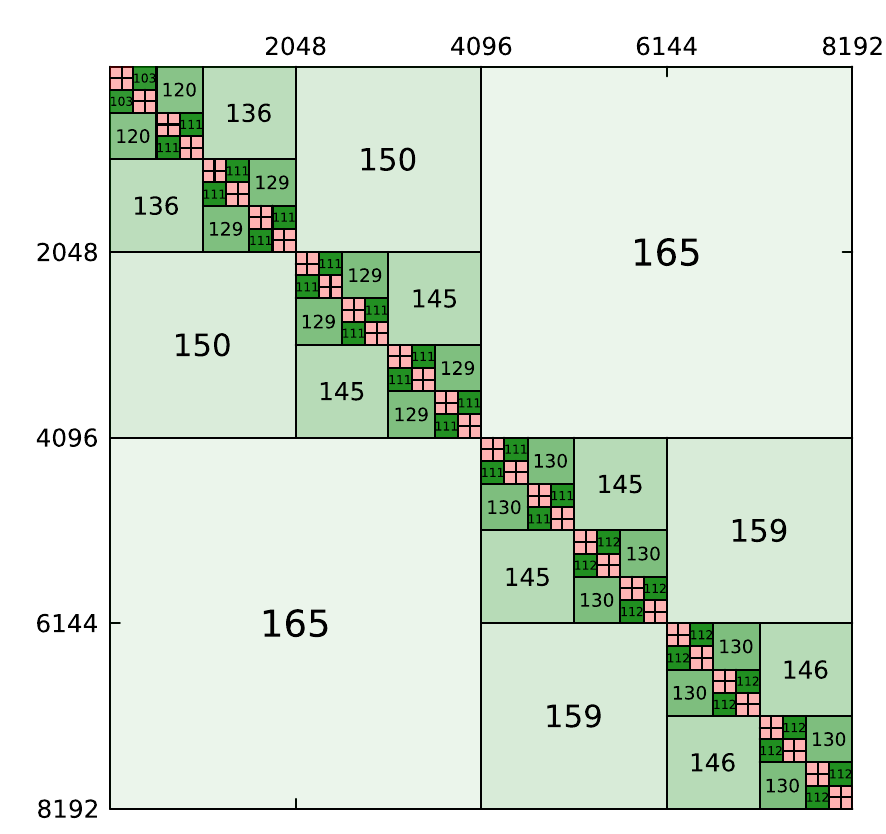}&
\includegraphics[width=0.475\textwidth]{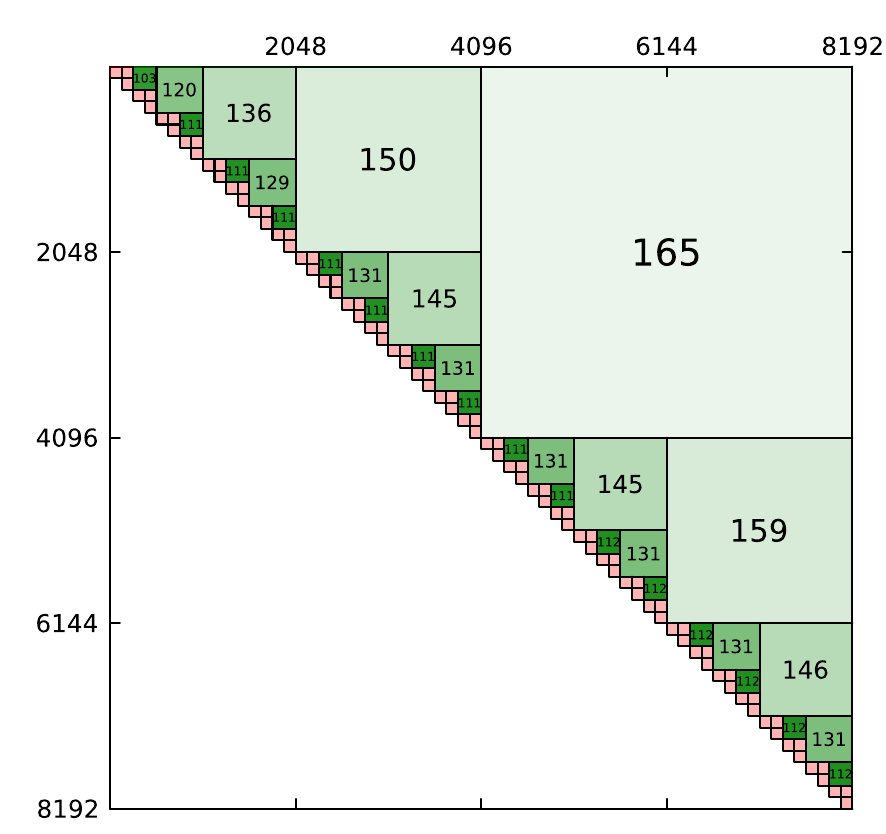}\\
\end{tabular}
\caption{Left: Numerical ranks of the hierarchical approximation of the principal finite section of the Chebyshev--Gram matrix for the weight in Eq.~\eqref{eq:modifiedchebyshevweight}. Right: numerical ranks of the corresponding Cholesky factor. The colours and opacities are produced in the same way as in Figure~\ref{fig:hierarchicalcholesky}.}
\label{fig:hierarchicalcholeskybis}
\end{center}
\end{figure}

\section{Conclusions \& future explorations}\label{section:conclusion}

We describe fast algorithms to produce and factor the Gram matrix that makes working with modified orthogonal polynomials numerically a breeze. There are a few promising extensions to consider. In the multivariate setting, OPs in $\bbR^d$ are characterized by a collection of $d$ multiplication matrices, such as multiplication by each of the Cartesian coordinates. Thus, the multivariate Gram matrices satisfy $d$ {\em simultaneous} matrix equations, and it remains to be seen how these can be combined to produce a fast Cholesky factorization. For a bivariate degree-$b$ polynomial measure modification of total degree-$n$, the direct banded Cholesky cost is ${\cal O}[(b n)^2n^2] = {\cal O}(b^2n^4)$, though the Cholesky factor contains only ${\cal O}(bn^3)$ nontrivial entries. Can displacement algorithms be used to produce the Cholesky factor in the optimal complexity? In Figure~\ref{fig:hierarchicaltimeanderror}, the execution time of the hierarchical Cholesky factor's matrix-vector product is quite competitive but the construction and factorization costs trail by about three orders of magnitude. Can the costs of these precomputations be reduced? With Sobolev orthogonal polynomials~\cite{Marcellan-Xu-33-308-15}, the inner products with derivatives no longer satisfy the simple matrix equation $X^\top W = W X$, but perhaps another matrix equation may be found to connect Sobolev polynomials to OPs with a standard inner product.

\bibliographystyle{siamplain}
\bibliography{/Users/Mikael/Bibliography/Mik}

\appendix

\section{Chebyshev polynomials}

The Chebyshev polynomials of the first kind, $T_n(x) = \cos[n\cos^{-1}(x)]$, are orthogonal polynomials on $(-1,1)$ with respect to the classical weight $w_c(x) = \frac{1}{\sqrt{1-x^2}}$. Next, we include a number of propositions that provide a self-contained description of all the formulas required to build the recurrence relations for the modified Chebyshev moments according to Theorem~\ref{theorem:weightODE}.

\begin{proposition}[\S 18.9.1 and 18.9.2 in~\cite{NIST:DLMF}]\label{prop:XT}
Multiplication by $x$ of first kind Chebyshev polynomials:
\begin{equation}
x{\bf T}(x) = {\bf T}(x)\underbrace{\begin{pmatrix} 0 & \tfrac{1}{2}\\ 1 & 0 & \tfrac{1}{2}\\ & \tfrac{1}{2} & 0 & \ddots\\ & & \ddots & \ddots\end{pmatrix}}_{X_T}.
\end{equation}
\end{proposition}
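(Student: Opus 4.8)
The plan is to derive the three-term recurrence for $T_n$ directly from the trigonometric definition $T_n(x) = \cos[n\cos^{-1}(x)]$ given at the start of this appendix, and then read off the columns of $X_T$. First I would set $x = \cos\theta$, so that $T_n(x) = \cos(n\theta)$, and compute the product $x\,T_n(x) = \cos\theta\cos(n\theta)$. Applying the product-to-sum identity $\cos A\cos B = \tfrac{1}{2}[\cos(A+B)+\cos(A-B)]$ with $A=\theta$ and $B=n\theta$ yields, for every $n\ge 1$,
\[
x\,T_n(x) = \tfrac{1}{2}\bigl[\cos((n+1)\theta)+\cos((n-1)\theta)\bigr] = \tfrac{1}{2}T_{n+1}(x) + \tfrac{1}{2}T_{n-1}(x).
\]
This single identity supplies the sub- and super-diagonal entries equal to $\tfrac{1}{2}$ in every column indexed $n\ge 1$, while the vanishing of the main diagonal reflects the absence of a $T_n$ term on the right-hand side.

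Second, I would treat the first column $n=0$ separately, since the identity above would formally reference $T_{-1}$. Using $T_0(x)=1$ gives $x\,T_0(x) = x = T_1(x)$, so the only nonzero entry of column $0$ is $(X_T)_{1,0}=1$. This is the sole asymmetry in $X_T$: because $\cos(-\theta)=\cos\theta$, the two would-be coefficients of $\tfrac{1}{2}$ attached to $T_{-1}$ and $T_{1}$ coalesce into a single coefficient of $1$ on $T_1$, which is exactly why the lower off-diagonal entry in the first column is $1$ rather than $\tfrac{1}{2}$.

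Finally, I would assemble the per-column relations into the matrix form $x{\bf T}(x) = {\bf T}(x)X_T$ by noting that the coefficients multiplying $T_k(x)$ in the expansion of $x\,T_n(x)$ are precisely the entries $(X_T)_{k,n}$, so the recurrence above populates column $n$ of $X_T$ for each $n$. I do not expect a genuine obstacle here: the only step requiring any care is the boundary column $n=0$, whose asymmetry I would verify directly from the initial values $T_0=1$ and $T_1=x$. An equivalent route, if preferred, is simply to quote the standard recurrence $T_{n+1}(x) = 2x\,T_n(x) - T_{n-1}(x)$ together with these initial conditions, which is the content of the cited formulas \S 18.9.1 and 18.9.2 in~\cite{NIST:DLMF}, and then rearrange to isolate $x\,T_n(x)$.
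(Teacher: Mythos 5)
Your proof is correct. Note that the paper offers no proof of this proposition at all: it is stated as a known fact with a citation to DLMF \S 18.9.1 and 18.9.2 (the recurrence $T_{n+1}(x) = 2x\,T_n(x) - T_{n-1}(x)$ and its initial conditions), which is exactly the ``equivalent route'' you mention in your last paragraph. Your trigonometric derivation via the product-to-sum identity is the standard argument underlying those cited formulas, and you handle the one genuinely delicate point — the $n=0$ column, where $\cos(-\theta)=\cos\theta$ forces the coefficients on $T_{-1}$ and $T_1$ to coalesce into the single entry $(X_T)_{1,0}=1$ — correctly and with the right explanation. Your reading of the matrix convention is also right: since ${\bf T}(x)$ is a row quasi-vector, the expansion coefficients of $x\,T_n(x)$ populate the $n$th \emph{column} of $X_T$, which is what makes $X_T$ the transpose of the Jacobi matrix and accounts for its asymmetry.
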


\begin{proposition}[Table 18.3.1 in~\cite{NIST:DLMF}]\label{prop:MT}
The first kind Chebyshev polynomial mass matrix:
\begin{equation}
\int_{-1}^1 {\bf T}^\top(x) {\bf T}(x)\frac{{\rm d}x}{\sqrt{1-x^2}} = \underbrace{\begin{pmatrix} \pi\\ & \frac{\pi}{2}\\ & & \tfrac{\pi}{2}\\ & & & \ddots\end{pmatrix}}_{M_T}.
\end{equation}
\end{proposition}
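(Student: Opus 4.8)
The plan is to read the matrix identity entrywise and reduce each entry to an elementary trigonometric integral. Since ${\bf T}(x)$ is an infinite row vector, the claim is equivalent to showing that $\int_{-1}^1 T_m(x)T_n(x)\,\frac{\dx}{\sqrt{1-x^2}}$ equals $\pi$ when $m=n=0$, equals $\tfrac{\pi}{2}$ when $m=n\ge1$, and vanishes otherwise; reading these values off the indices reproduces the stated diagonal matrix $M_T$. First I would recall the defining trigonometric representation $T_n(\cos\theta)=\cos(n\theta)$, which is the natural vehicle here because it simultaneously diagonalizes the weight and linearizes the polynomials.

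The key step is the substitution $x=\cos\theta$ with $\theta\in[0,\pi]$. Under this change of variables $\dx=-\sin\theta\,{\rm d}\theta$ and $\sqrt{1-x^2}=\sin\theta$, and as $x$ runs from $-1$ to $1$ the angle $\theta$ runs from $\pi$ to $0$; the reversal of the limits and the sign of the differential cancel, while the weight factor $\sin\theta$ cancels the Jacobian exactly. This collapses the problem to
\[
\int_{-1}^1 T_m(x)T_n(x)\,\frac{\dx}{\sqrt{1-x^2}} = \int_0^\pi \cos(m\theta)\cos(n\theta)\,{\rm d}\theta,
\]
a purely trigonometric integral with no remaining weight.

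To finish I would apply the product-to-sum identity $\cos(m\theta)\cos(n\theta)=\tfrac12\big[\cos((m+n)\theta)+\cos((m-n)\theta)\big]$ and integrate term by term, using that $\int_0^\pi\cos(k\theta)\,{\rm d}\theta$ equals $\pi$ for $k=0$ and vanishes for every nonzero integer $k$. For $m\ne n$ both frequencies $m+n$ and $|m-n|$ are nonzero and the integral is $0$; for $m=n\ge1$ only the second term survives, giving $\tfrac12\cdot\pi=\tfrac{\pi}{2}$; and for $m=n=0$ \emph{both} arguments vanish, giving $\tfrac12(\pi+\pi)=\pi$. The computation is entirely routine, so there is no real obstacle; the sole point demanding care is precisely this bookkeeping for the doubled constant in the $(0,0)$ entry, which is what distinguishes the single $\pi$ in the top-left corner from the uniform $\tfrac{\pi}{2}$ along the remaining diagonal.
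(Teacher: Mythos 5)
Your proof is correct: the substitution $x=\cos\theta$, the identity $T_n(\cos\theta)=\cos(n\theta)$, the product-to-sum formula, and the careful handling of the doubled constant in the $(0,0)$ entry together give a complete and accurate verification of the stated mass matrix. Note, however, that the paper offers no proof of this proposition at all --- it is stated as a known fact with a citation to Table 18.3.1 of the DLMF --- so your argument is not an alternative route but rather the standard derivation underlying the cited table, and it stands on its own as a valid self-contained justification.
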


\begin{proposition}[\S 18.9.9 in~\cite{NIST:DLMF}]\label{prop:RTU}
Conversion of first kind Chebyshev polynomials:
\begin{equation}
{\bf T}(x) = {\bf U}(x)\underbrace{\begin{pmatrix} 1 & 0 & -\tfrac{1}{2}\\ & \tfrac{1}{2} & 0 & \ddots\\ & & \tfrac{1}{2} & \ddots\\ & & & \ddots\end{pmatrix}}_{R_T^U}.
\end{equation}
\end{proposition}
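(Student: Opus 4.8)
The plan is to verify the stated connection-coefficient matrix column by column, where column $n$ encodes the expansion $T_n(x) = \sum_k U_k(x)(R_T^U)_{k,n}$ of a first-kind Chebyshev polynomial in the second-kind basis. The most direct route exploits the trigonometric representations of the two families: substituting $x = \cos\theta$, one has $T_n(\cos\theta) = \cos(n\theta)$ and $U_n(\cos\theta) = \sin((n+1)\theta)/\sin\theta$. The identity to be established then becomes purely trigonometric, and the matrix structure will follow by reading off the expansion as a function of $n$.

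First I would record the sum-to-product identity
\[
\sin((n+1)\theta) - \sin((n-1)\theta) = 2\sin\theta\,\cos(n\theta),
\]
which, after dividing by $2\sin\theta$, gives $\tfrac{1}{2}\big(U_n(\cos\theta) - U_{n-2}(\cos\theta)\big) = \cos(n\theta) = T_n(\cos\theta)$ for every $n \ge 1$, under the standard convention $U_{-1} \equiv 0$. Since $x = \cos\theta$ ranges over $(-1,1)$ and both sides are polynomials, the relation $T_n = \tfrac{1}{2}(U_n - U_{n-2})$ holds identically in $x$ for $n \ge 2$; for $n=1$ it collapses to $T_1 = \tfrac{1}{2}U_1$ because $U_{-1} = 0$.

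Next I would treat the two low-index base cases separately, since they sit at the boundary of the matrix: $T_0 = U_0$ forces the $(0,0)$ entry to equal $1$, and $T_1 = \tfrac{1}{2}U_1$ forces the $(1,1)$ entry to equal $\tfrac{1}{2}$. For each $n \ge 2$ the expansion places $-\tfrac{1}{2}$ in row $n-2$ and $\tfrac{1}{2}$ in row $n$ of column $n$, which is precisely the main diagonal of $\tfrac{1}{2}$'s (shifted by the initial $1$) together with the second superdiagonal of $-\tfrac{1}{2}$'s displayed for $R_T^U$. Assembling these columns reproduces the claimed matrix.

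The computation carries no genuine difficulty; the only point demanding care is the bookkeeping at the upper-left corner, where the exceptional normalizations $T_0 = U_0$ and $T_1 = \tfrac{1}{2}U_1$ (equivalently, the conventions $U_{-1} = 0$ and the absence of a $U_{-2}$ term) must be reconciled with the generic two-term pattern so that the first two columns match the displayed entries $1$ and $\tfrac{1}{2}$ rather than the uniform $\tfrac{1}{2}$ of the bulk. An alternative, purely algebraic verification would instead proceed by induction on the shared three-term recurrence $p_{n+1} = 2xp_n - p_{n-1}$ satisfied by both families, checking that the candidate columns propagate consistently; this avoids trigonometry but requires the same careful handling of the initial conditions.
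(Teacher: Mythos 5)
Your verification is correct. One structural point to be aware of: the paper does not prove this proposition at all --- it is stated purely as a citation of \S 18.9.9 in the DLMF, where the identity $T_n(x) = \tfrac{1}{2}\bigl(U_n(x) - U_{n-2}(x)\bigr)$ appears as a known formula. So there is no in-paper argument to compare against; what you have supplied is the standard derivation that sits behind the cited formula. Your route --- the sum-to-product identity $\sin((n+1)\theta) - \sin((n-1)\theta) = 2\sin\theta\cos(n\theta)$, division by $2\sin\theta$, and the observation that two polynomials agreeing on the infinite set $(-1,1)$ agree identically --- is exactly the textbook proof, and your bookkeeping at the corner is right: column $0$ carries the exceptional entry $1$ (since $T_0 = U_0$), column $1$ carries $\tfrac{1}{2}$ (since $T_1 = \tfrac{1}{2}U_1$ under the convention $U_{-1}\equiv 0$), and every column $n \ge 2$ places $-\tfrac{1}{2}$ in row $n-2$ and $\tfrac{1}{2}$ in row $n$, matching the displayed $R_T^U$ under the paper's convention that ${\bf T}(x) = {\bf U}(x)R_T^U$ reads off expansions columnwise. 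Your remark that an induction on the shared recurrence $p_{n+1} = 2xp_n - p_{n-1}$ would also work is accurate but adds nothing here; the trigonometric argument is already the shortest complete path.
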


\begin{proposition}[\S 18.9.10 in~\cite{NIST:DLMF}]\label{prop:LUT}
Weighted conversion of second kind Chebyshev polynomials:
\begin{equation}
(1-x^2){\bf U}(x) = {\bf T}(x)\underbrace{\begin{pmatrix} \tfrac{1}{2}\\ 0 & \tfrac{1}{2}\\ -\tfrac{1}{2} & 0 & \tfrac{1}{2}\\ & \ddots & \ddots & \ddots\end{pmatrix}}_{L_U^T}.
\end{equation}
\end{proposition}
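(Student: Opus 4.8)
The plan is to verify the identity columnwise using the trigonometric representation of the Chebyshev polynomials. Setting $x = \cos\theta$, I would use $T_n(x) = \cos(n\theta)$ and $U_n(x) = \sin[(n+1)\theta]/\sin\theta$, together with the observation that $1-x^2 = \sin^2\theta$. Matching the $n^{\rm th}$ entries of the row vectors on each side of the claimed identity reduces the proposition to the scalar statement that $(1-x^2)U_n(x)$ equals ${\bf T}(x)$ applied to the $n^{\rm th}$ column of $L_U^T$.

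First I would compute the $n^{\rm th}$ entry of the left-hand side vector:
\[
(1-x^2)U_n(x) = \sin^2\theta\,\frac{\sin[(n+1)\theta]}{\sin\theta} = \sin\theta\,\sin[(n+1)\theta].
\]
Then I would apply the product-to-sum identity $\sin A\sin B = \tfrac{1}{2}[\cos(A-B)-\cos(A+B)]$ with $A=\theta$ and $B=(n+1)\theta$, obtaining $\tfrac{1}{2}[\cos(n\theta)-\cos((n+2)\theta)]$; here the evenness of cosine absorbs the sign in $A-B=-n\theta$ and makes the formula valid uniformly for every $n\ge0$, including the boundary case $n=0$. Translating back to polynomials,
\[
(1-x^2)U_n(x) = \tfrac{1}{2}T_n(x) - \tfrac{1}{2}T_{n+2}(x).
\]

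Finally, I would read off the matrix $L_U^T$ directly from this scalar identity. Since the right-hand side is a combination of only $T_n$ and $T_{n+2}$, the $n^{\rm th}$ column of $L_U^T$ carries the single entry $\tfrac{1}{2}$ in row $n$ and $-\tfrac{1}{2}$ in the row two positions below, with all remaining entries zero. This is precisely the banded lower-triangular pattern displayed, with $\tfrac{1}{2}$ on the main diagonal and $-\tfrac{1}{2}$ on the subdiagonal two positions below it (consistent with the lower bandwidth $\deg(\sigma)=2$). There is no genuine obstacle here; the only point requiring minor care is confirming that the leading columns $n=0,1$ reproduce the displayed upper-left corner of the matrix, which follows immediately from the uniform formula above, and one may optionally cross-check via the degrees, since both sides have degree $n+2$.
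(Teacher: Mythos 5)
Your proof is correct. Note that the paper itself offers no proof of this proposition at all: it is stated as a known identity with a citation to \S 18.9.10 of the DLMF, so there is no ``paper's approach'' to compare against, and your derivation serves as a self-contained verification of the cited formula. The route you take is the standard one and it is carried out cleanly: the substitution $x=\cos\theta$ turns $(1-x^2)U_n(x)$ into $\sin\theta\,\sin[(n+1)\theta]$, the product-to-sum identity gives
\[
(1-x^2)U_n(x) = \tfrac{1}{2}T_n(x) - \tfrac{1}{2}T_{n+2}(x),
\]
and reading this columnwise against the convention ${\bf U}(x)$, ${\bf T}(x)$ as row vectors correctly places $\tfrac{1}{2}$ in row $n$ and $-\tfrac{1}{2}$ in row $n+2$ of the $n^{\rm th}$ column, reproducing the displayed banded lower-triangular matrix. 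You also handle the two points where a careless argument could slip: the evenness of cosine absorbing the sign in $\cos(-n\theta)$, so the scalar identity holds uniformly from $n=0$ onward, and the consistency of the lower bandwidth with $\deg(\sigma)=2$. Nothing is missing.
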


\begin{proposition}[\S 18.9.21 in~\cite{NIST:DLMF}]\label{prop:DTU}
Differentiation of first kind Chebyshev polynomials:
\begin{equation}
\DD {\bf T}(x) = {\bf U}(x)\underbrace{\begin{pmatrix} 0 & 1\\ & & 2\\ & & & 3\\ & & & & \ddots\end{pmatrix}}_{D_T^U}.
\end{equation}
\end{proposition}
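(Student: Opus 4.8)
The plan is to reduce the infinite matrix identity $\DD{\bf T}(x) = {\bf U}(x)D_T^U$ to a single scalar identity read off column by column, and then to verify that identity by the trigonometric parametrization of the two Chebyshev families. Inspecting $D_T^U$, the $n^{\rm th}$ column (indexing from $0$) has exactly one nonzero entry, equal to $n$ and located one row above the diagonal; the zeroth column vanishes. Equating the $n^{\rm th}$ component on both sides of $\DD{\bf T}(x) = {\bf U}(x)D_T^U$ therefore makes the proposition equivalent to the classical derivative formula
\[
T_n'(x) = n\,U_{n-1}(x),\quad n\ge1,\qquad T_0'(x) = 0.
\]
Because both sides of each of these are polynomials, it is enough to establish the identity on the open interval $(-1,1)$ and extend to the endpoints $x=\pm1$ by continuity.

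First I would set $x=\cos\theta$ for $\theta\in(0,\pi)$, so that by definition $T_n(x)=\cos(n\theta)$ and $U_{n-1}(x)=\sin(n\theta)/\sin\theta$. The chain rule then gives
\[
T_n'(x) = \frac{{\rm d}}{{\rm d}x}\cos(n\theta) = \frac{-n\sin(n\theta)}{{\rm d}x/{\rm d}\theta} = \frac{-n\sin(n\theta)}{-\sin\theta} = n\,\frac{\sin(n\theta)}{\sin\theta} = n\,U_{n-1}(x),
\]
which is precisely the desired identity for $n\ge1$; the case $n=0$ is immediate from $T_0\equiv1$. This is the most direct route and requires essentially no separate base-case bookkeeping.

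The computation is routine, so the only genuine subtlety is the removable division by $\sin\theta$ at $\theta\in\{0,\pi\}$, i.e.\ at $x=\pm1$. I would dispose of this by the polynomial remark above: the two sides agree on the infinite set $(-1,1)$, hence agree identically as polynomials, with the endpoint values recovered from the known limits $U_{n-1}(\pm1)=(\pm1)^{n-1}n$. As a robustness check one could instead avoid the parametrization and argue by induction, using the shared three-term recurrence and the relation $U_n-U_{n-2}=2T_n$ to propagate $T_n'=nU_{n-1}$ upward, but the trigonometric argument is cleaner and I expect no real obstacle beyond the endpoint technicality just noted.
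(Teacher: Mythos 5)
Your proof is correct: reading off the single nonzero entry $n$ in position $(n-1,n)$ of $D_T^U$ correctly reduces the matrix identity to $T_n'(x)=nU_{n-1}(x)$, and the chain-rule computation under $x=\cos\theta$, together with the polynomial-identity argument to handle $x=\pm1$, establishes it. Note, however, that the paper offers no proof to compare against: this proposition is stated as a known identity, cited directly from \S 18.9.21 of the DLMF, so your derivation is simply the standard trigonometric argument underlying that reference, and it is sound.
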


\subsection{Modified Chebyshev moments}

Beyond the Clenshaw--Curtis and log-Chebyshev weights in \S~\ref{section:hierarchical}, some other modified Chebyshev moments are easy to describe, including the absolute value weight:
\[
\mu_n[|x|] = \left\{\begin{array}{ccc} \frac{1}{1-(n/2)^2} & {\rm for} & n {\rm~a~multiple~of~}4,\\ 0 & \multicolumn{2}{c}{\rm otherwise.}\end{array}\right.
\]
Next, we will use the pure log weight to illustrate how Theorem~\ref{theorem:weightODE} is useful in an analytical setting. Let $\psi(n)$ denote the digamma function, or the logarithmic derivative of the gamma function~\cite[\S 5.2.2]{NIST:DLMF} and let $\gamma = 0.577\ldots$ denote the Euler--Mascheroni constant~\cite[\S 5.2.3]{NIST:DLMF}.
\begin{lemma}
For $n>1$, the modified Chebyshev moments of the log weight are:
\[
(n^2-1)\mu_n\left[\log\left(\frac{2}{1-x}\right)\right] = \left\{\begin{array}{ccc} 2-2\gamma-4\ln2 - \frac{4n}{n^2-1}-2\psi(\frac{n-1}{2}) & {\rm for} & n{\rm~even,}\\ 2-2\gamma-4\ln2-\frac{2}{n}-2\psi(\frac{n}{2}) & {\rm for} & n{\rm~odd.}\end{array}\right.
\]
\end{lemma}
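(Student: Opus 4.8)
The plan is to apply Theorem~\ref{theorem:weightODE} to the pure log weight, turning a first-order differential equation into a short recurrence for the modified Chebyshev moments, and then to solve that recurrence in closed form. Writing $w(x) = \log 2 - \log(1-x)$ gives $w'(x) = 1/(1-x)$, and with the Chebyshev $\sigma(x) = 1-x^2$ one computes $(\sigma w)' = -2x\,w + (1-x^2)/(1-x) = -2x\,w + (1+x)$. Hence $w$ satisfies Eq.~\eqref{eq:weightODE} in the form
\[
(\sigma w)' + 2x\,w = 1+x,
\]
that is, with $a(x) = 1$, $b(x) = 2x$, and $c(x) = 1+x$; this is exactly the $\alpha=\beta=0$ row of Table~\ref{table:weightODEs}. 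Since $\deg a = 0$ and $\deg b = 1$, Theorem~\ref{theorem:weightODE} guarantees a recurrence of length at most $\max\{3,3\}=3$, and the inhomogeneity $c(x)=1+x$ has elementary polynomial moments.

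Next I would make the recurrence explicit using the appendix. For Chebyshev, $\tau(x) = -x$, so $a(X_T) = I$, $b(X_T) = 2X_T$, and $\tau(X_T) = -X_T$ collapse the operator of Theorem~\ref{theorem:weightODE} to $M_T\{L_U^T D_T^U + X_T\}M_T^{-1}$. By Propositions~\ref{prop:LUT} and~\ref{prop:DTU}, the product $L_U^T D_T^U$ has as its only nonzero entries $\tfrac{j}{2}$ in row $j-1$ of column $j$ and $-\tfrac{j}{2}$ in row $j+1$ of column $j$; adding $X_T$ from Proposition~\ref{prop:XT} and conjugating by the diagonal mass matrix $M_T$ of Proposition~\ref{prop:MT} produces a tridiagonal operator with zero main diagonal. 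Reading off row $m\ge 2$ and re-indexing $n = m+1$ gives
\[
(n+1)\,\mu_n[w] = (n-3)\,\mu_{n-2}[w] + 2\,\mu_{n-1}[1+x],\qquad n\ge 3,
\]
which couples only moments of equal parity, so the even- and odd-indexed sequences decouple.

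Finally I would assemble the input data and solve. The inhomogeneity $\mu_m[1+x] = \mu_m[1] + \tfrac12(\mu_{m+1}[1]+\mu_{m-1}[1])$ is built from the Clenshaw--Curtis moments of Eq.~\eqref{eq:ClenshawCurtismoments} together with $x\,T_m = \tfrac12(T_{m+1}+T_{m-1})$, while the seeds $\mu_0[w]=2$ and $\mu_2[w]=-\tfrac29$ follow by direct integration (the latter from the anomalous $m=1$ row, whose $M_T$ ratio is $\tfrac{\pi/2}{\pi}$). Dividing the recurrence by $(n+1)$ and introducing the summation factor that telescopes the coefficient ratio $\tfrac{n-3}{n+1}$ reduces each parity class to a finite sum of terms $\tfrac{1}{1-k^2}$ over $k$ of fixed parity. \textbf{The main obstacle will be resumming these harmonic-type partial sums into the digamma function}: applying $\psi(z+1) = \psi(z) + 1/z$ with the values $\psi(1) = -\gamma$ and $\psi(\tfrac12) = -\gamma - 2\ln 2$ collapses the even sum to an expression in $\psi(\tfrac{n-1}{2})$ and the odd sum to one in $\psi(\tfrac{n}{2})$; isolating the residual rational terms $-\tfrac{4n}{n^2-1}$ and $-\tfrac{2}{n}$ and multiplying through by $n^2-1$ then yields the stated formula. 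The parity bookkeeping and the separation of these rational remainders are the only genuinely delicate parts; everything else is routine integration and algebra.
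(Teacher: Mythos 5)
Your proposal is correct and takes essentially the same route as the paper's proof: the $\alpha=\beta=0$ row of Table~\ref{table:weightODEs} gives $(\sigma w)'+2xw=1+x$, Theorem~\ref{theorem:weightODE} with Propositions~\ref{prop:XT}, \ref{prop:MT}, \ref{prop:LUT} and~\ref{prop:DTU} yields the three-term recurrence (your $(n+1)\mu_n[w]=(n-3)\mu_{n-2}[w]+2\mu_{n-1}[1+x]$ is the paper's $(n+2)\mu_{n+1}[w]-(n-2)\mu_{n-1}[w]=2\mu_n[1+x]$ re-indexed), and parity decoupling plus telescoping reduces everything to harmonic-type partial sums. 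The only differences are cosmetic: you seed with $\mu_0[w]=2$ and $\mu_2[w]=-\tfrac{2}{9}$ where the paper uses $\mu_0[w]=2$ and $\mu_1[w]=1$, and you explicitly carry out the digamma resummation that the paper's proof leaves implicit after arriving at the telescoped sums.
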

\begin{proof}
By Table~\ref{table:weightODEs}, the weight function $w(x) = \log\left(\dfrac{2}{1-x}\right)$ satisfies the inhomogeneous differential equation:
\[
(1-x^2)w' = [(1-x^2)w]'+2xw = 1+x.
\]
By Theorem~\ref{theorem:weightODE} and Propositions~\ref{prop:XT},~\ref{prop:MT},~\ref{prop:LUT}, and~\ref{prop:DTU}, this equation can be converted into the following three-term recurrence relation:
\begin{align*}
\mu_0[w] & = 2,\\
\mu_1[w] & = 1,\\
(n+2)\mu_{n+1}[w] - (n-2)\mu_{n-1}[w] & = 2\mu_n[1+x] = \left\{\begin{array}{ccc} \frac{4}{1-n^2} & {\rm for} & n{\rm~even},\\ \frac{4}{4-n^2} & {\rm for} & n{\rm~odd}.\end{array}\right.
\end{align*}
Separating even and odd modified moments, we find two-term recurrence relations for each of these. For even moments:
\[
(2n+3)\mu_{2n+2}[w] - (2n-1)\mu_{2n}[w] = \frac{4}{4-(2n+1)^2},
\]
and for odd moments:
\[
(2n+4)\mu_{2n+3}[w] - 2n\mu_{2n+1}[w] = \frac{1}{4-(2n+2)^2}.
\]
In the former:
\[
(1-4n^2)\mu_{2n}[w] = 2-\sum_{k=0}^{n-1}\frac{4(2k+1)}{4-(2k+1)^2},
\]
and in the latter:
\[
2n(2n+2)\mu_{2n+1}[w] = \sum_{k=0}^{n-1}\frac{4(2k+2)}{1-(2k+2)^2}.
\]
\end{proof}

As can be seen by the explicit examples, many modified Chebyshev moments have slow algebraic decay as a function of the degree of the Chebyshev polynomial. In this case, we develop of theory of low-rank approximation for such an {\em ansatz}. This result is similar to the constructions of~\cite{Alpert-Rokhlin-12-158-91,Grasedyck-13-01,Keiner-31-2151-09}.

\begin{theorem}\label{theorem:algebraicmomentsequalslowrank}
Suppose $\mu_n = \frac{2}{n^\alpha}$ for some $\alpha>0$ and for every $n>0$. Then, for $s =\lfloor\frac{n}{2}\rfloor$, there exists a rank-$r$ matrix $\tilde{W}_T$ such that:
\[
|(W_T)_{j,k} - (\tilde{W}_T)_{j,k}| \le \epsilon,\quad{\rm for}\quad 0\le j< s,\quad s\le k< n,
\]
where:
\[
r = r(\alpha, \epsilon, n) = \log\left[\frac{\epsilon}{2}\left(\frac{(s-1)(1-z(s))^2}{4z(s)}\right)^\alpha\right]\Bigg/\log\left(\frac{2\alpha e z(s)}{1-z(s)}\right),
\]
where $z(s) = \frac{s-1}{s+1+2\sqrt{s}}$.
\end{theorem}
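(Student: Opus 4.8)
The plan is to exploit the explicit Toeplitz-plus-Hankel structure in \eqref{eq:ChebyshevGram} together with the algebraic ansatz to reduce the claim to a one-dimensional approximation problem. First I would restrict attention to the off-diagonal index block $0\le j<s\le k<n$. There $k\ge s>s-1\ge j$, so $|j-k|=k-j\ge1$, and substituting $\mu_n=2/n^\alpha$ into \eqref{eq:ChebyshevGram} gives the closed form
\[
(W_T)_{j,k}=\frac{1}{(k+j)^\alpha}+\frac{1}{(k-j)^\alpha}.
\]
For each fixed $k$ these are sample values of $F_k(\zeta)=(k+\zeta)^{-\alpha}+(k-\zeta)^{-\alpha}$, which is analytic on the real interval $[0,s-1]$ with branch points at $\zeta=\pm k$. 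The Hankel term is benign because its singularity $\zeta=-k$ sits a distance $\ge s$ from the interval; the difficulty is concentrated in $(k-\zeta)^{-\alpha}$, whose singularity $\zeta=k$ is closest, at distance $k-(s-1)\ge1$, and is worst precisely at the touching corner $k=s$, $k-j=1$.

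The rank-$r$ approximant $\tilde{W}_T$ I would build by replacing each column $k$ with the degree-$(r-1)$ truncated Chebyshev expansion of $F_k$ on $[0,s-1]$; since the Chebyshev basis in $\zeta=j$ is common to all columns, the resulting matrix has rank at most $r$, and $(W_T)_{j,k}-(\tilde{W}_T)_{j,k}$ is exactly the Chebyshev truncation error of $F_k$. Its decay rate is governed by the Bernstein ellipse for $F_k$ after the affine map $\ell(\xi)=\tfrac{s-1}{2}(\xi+1)$ carrying $[-1,1]$ onto $[0,s-1]$. The nearest singularity $\zeta=k$ maps to $\xi_*=\tfrac{2k-(s-1)}{s-1}$, whose smallest value occurs at $k=s$, namely $\xi_*=\tfrac{s+1}{s-1}$. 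Solving $\tfrac12(\rho+\rho^{-1})=\xi_*$ and using $\xi_*^2-1=\tfrac{4s}{(s-1)^2}$ gives
\[
\rho=\xi_*+\sqrt{\xi_*^2-1}=\frac{s+1+2\sqrt{s}}{s-1}=\frac{\sqrt{s}+1}{\sqrt{s}-1}=\frac{1}{z(s)},
\]
which is exactly where $z(s)=\tfrac{\sqrt s-1}{\sqrt s+1}$ enters, and where the $\sqrt s$ improvement over a single-point Taylor expansion (which would only yield the ratio $\tfrac{s-1}{s+1}$) originates. Monotonicity of $\xi_*$, hence of $\rho$, in $k$ shows $k=s$ is the worst column, so the single rate $z(s)$ controls the entire block.

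It remains to convert analyticity into an explicit entrywise bound. I would estimate the conformally-mapped Taylor (equivalently Chebyshev) coefficients of $(k-\zeta)^{-\alpha}$ by the binomial coefficients $\binom{\alpha+m-1}{m}$ weighted by $z(s)^m$, bound $\binom{\alpha+m-1}{m}$ through Stirling's formula (the step that produces the factors $\alpha$ and $e$), and sum the tail from $m=r$ as a geometric-type series (producing the $\tfrac{1}{1-z(s)}$). Collecting the prefactor from the value of the kernel at the touching corner together with the geometry of $\ell$ yields a bound of the form
\[
\bigl|(W_T)_{j,k}-(\tilde{W}_T)_{j,k}\bigr|\le 2\left(\frac{4z(s)}{(s-1)(1-z(s))^2}\right)^{\!\alpha}\left(\frac{2\alpha e\,z(s)}{1-z(s)}\right)^{-r},
\]
uniformly for $0\le j<s\le k<n$. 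Imposing that the right-hand side be at most $\epsilon$ and solving for $r$ reproduces the stated rank, noting that the prefactor $\tfrac{4z(s)}{(s-1)(1-z(s))^2}$ in fact simplifies identically to $1$. The main obstacle I anticipate is this coefficient estimate: the branch point of $(k-\zeta)^{-\alpha}$ lies on the boundary of the limiting ellipse $E_{1/z}$, so $\sup_{E_\rho}|F_k|$ blows up as $\rho\uparrow 1/z$, and one must either optimize the choice of $\rho$ against $r$ or bound the coefficients directly, keeping all constants explicit and uniform over every $\alpha>0$ (covering both the $\alpha<1$ and $\alpha\ge1$ regimes of binomial growth). The companion bound for the benign Hankel term $(k+\zeta)^{-\alpha}$ is strictly smaller and is absorbed into the displayed factor of $2$.
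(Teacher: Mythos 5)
Your skeleton matches the paper's proof at every structural point: you restrict to the block $0\le j<s\le k<n$, approximate each column $k$ by a degree-$(r-1)$ polynomial in the row variable over a common reference interval (so that a shared basis in $j$ caps the rank at $r$, exactly the paper's outer-product construction), identify $k=s$ as the worst column by monotonicity, and your Joukowski ratio $\rho^{-1}=(\sqrt{s}-1)/(\sqrt{s}+1)$ is precisely the paper's $z(s)$. Your observation that the prefactor $\frac{4z(s)}{(s-1)(1-z(s))^2}$ simplifies identically to $1$ is also correct and a nice sanity check on the stated formula for $r$.

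The gap is the step you yourself flag as the main obstacle: the coefficient estimate is never carried out, and the tool you lead with cannot carry it out. The branch point of $(k-\zeta)^{-\alpha}$ for the worst column $k=s$ maps \emph{onto} the boundary of the limiting ellipse $E_{1/z(s)}$, so the classical bound (truncation error controlled by $\sup_{E_\rho}|F_k|\cdot\rho^{-N}$) degenerates: the sup blows up as $\rho\uparrow 1/z(s)$, and optimizing $\rho$ against $r$ produces extra algebraic factors that do not assemble into the stated formula. Your fallback of ``bounding the coefficients directly'' is exactly what must be done, but you do not do it, and your sketch of it is internally inconsistent: you posit coefficients bounded by $\binom{\alpha+m-1}{m}z(s)^m$, i.e.\ $(\alpha)_m/m!$, yet your displayed bound contains $2\alpha e$, which requires $(2\alpha)_m/m!$, and the final factor should carry exponent $+r$, not $-r$, for the bound to decay and invert to the stated $r$. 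The paper resolves precisely this difficulty by changing basis: the ultraspherical generating function $\bigl(\frac{z+z^{-1}}{2}-x\bigr)^{-\alpha}=(2z)^{\alpha}\sum_{m\ge0}C_m^{(\alpha)}(x)\,z^m$, with parameter matched to the singularity exponent $\alpha$, yields \emph{exact} expansion coefficients for both the Toeplitz and Hankel parts (with explicit $z_2(k,s)<z_1(k,s)\le z(s)$ solving two quadratics), after which only elementary estimates remain: the uniform bound $|C_m^{(\alpha)}(x)|\le\frac{(2\alpha)_m}{m!}$, a hypergeometric summation of the tail producing the $(1-z)^{-(2\alpha+N)}$, and the binomial inequality $\frac{(2\alpha)_N}{N!}\le(2\alpha e)^N$ — which is exactly where the factors $2\alpha e$ and $\frac{z}{1-z}$ in the stated $r$ originate. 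If you wish to salvage your Chebyshev route, the factorization $\frac{z+z^{-1}}{2}-\frac{\xi+\xi^{-1}}{2}=\frac{(1-z\xi)(1-z\xi^{-1})}{2z}$ on the unit circle gives the Chebyshev coefficients explicitly and is the same algebra underlying the generating function; but as written, your argument asserts the target inequality rather than proving it, so the proof is incomplete at its quantitative core.
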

\begin{proof}
By Eq.~\eqref{eq:ChebyshevGram}:
\[
(W_T)_{j,k} = \frac{1}{(k-j)^\alpha} + \frac{1}{(k+j)^\alpha},\quad{\rm for}\quad k>j.
\]
We shall create a polynomial approximation to a fixed column $s\le k<n$. Consider the map $m : x \mapsto \frac{(s-1)(1+x)}{2}$ from the unit interval $[-1,1]$ to $[0,s-1]$. We will consider the function:
\[
W_T(x;k) = \frac{1}{\left(k-\frac{(s-1)(1+x)}{2}\right)^\alpha} + \frac{1}{\left(k+\frac{(s-1)(1+x)}{2}\right)^\alpha}.
\]
By the ultraspherical polynomial generating function~\cite[\S 18.12.4]{NIST:DLMF}:
\[
\frac{1}{\left(\dfrac{z+z^{-1}}{2}-x\right)^\lambda} = |2z|^\lambda\sum_{n=0}^\infty C_n^{(\lambda)}(x) z^n,\quad{\rm for}\quad \lambda>0,~|z|<1,~|x|\le 1,
\]
we can find a series representation for $W_T(x;k)$:
\begin{align*}
& \left(\frac{s-1}{2}\right)^\alpha W_T(x;k) = \frac{1}{\left(\frac{2k}{s-1}-1-x\right)^\alpha} + \frac{1}{\left(\frac{2k}{s-1}+1+x\right)^\alpha},\\
& = [2z_1(k,s)]^\alpha\sum_{n=0}^\infty C_n^{(\alpha)}(x) z_1(k,s)^n + [2z_2(k,s)]^\alpha\sum_{n=0}^\infty C_n^{(\alpha)}(-x) z_2(k,s)^n,
\end{align*}
where $z_1(k,s)$ and $z_2(k,s)$ are the smallest (positive) respective roots of:
\begin{align*}
z_1^2 - \left(\frac{4k}{s-1}-2\right)z_1 + 1 & = 0,\quad{\rm and}\\
z_2^2 -\left(\frac{4k}{s-1}+2\right)z_2 + 1 & = 0.
\end{align*}
Since the slopes of the linear terms dictate that $z_2(k,s) < z_1(k,s)<1$, the error incurred in truncating the series after $N$ terms is:
\begin{align*}
& \left|W_T(x;k) - \left(\frac{4z_1(k,s)}{s-1}\right)^\alpha \sum_{n=0}^{N-1} C_n^{(\alpha)}(x) z_1(k,s)^n - \left(\frac{4z_2(k,s)}{s-1}\right)^\alpha \sum_{n=0}^{N-1} C_n^{(\alpha)}(-x) z_2(k,s)^n \right|,\\
& \le \left(\frac{4z_1(k,s)}{s-1}\right)^\alpha \sum_{n=N}^\infty \left|C_n^{(\alpha)}(x)\right| z_1(k,s)^n + \left(\frac{4z_2(k,s)}{s-1}\right)^\alpha \sum_{n=N}^\infty \left|C_n^{(\alpha)}(-x)\right| z_2(k,s)^n,\\
& \le \left(\frac{4z_1(k,s)}{s-1}\right)^\alpha \sum_{n=N}^\infty \frac{(2\alpha)_n}{n!} z_1(k,s)^n + \left(\frac{4z_2(k,s)}{s-1}\right)^\alpha \sum_{n=N}^\infty \frac{(2\alpha)_n}{n!} z_2(k,s)^n,\\
& \le 2\left(\frac{4z_1(k,s)}{s-1}\right)^\alpha \sum_{n=N}^\infty \frac{(2\alpha)_n}{n!} z_1(k,s)^n,\\
& = 2\left(\frac{4z_1(k,s)}{s-1}\right)^\alpha \frac{(2\alpha)_N}{N!}z_1(k,s)^N \sum_{n=0}^\infty \frac{(2\alpha+N)_n}{(1+N)_n} z_1(k,s)^n,\\
& \le 2\left(\frac{4z_1(k,s)}{s-1}\right)^\alpha \frac{(2\alpha)_N}{N!}z_1(k,s)^N \sum_{n=0}^\infty \frac{(2\alpha+N)_n}{n!} z_1(k,s)^n,\\
& = 2\left(\frac{4z_1(k,s)}{s-1}\right)^\alpha \frac{(2\alpha)_N}{N!}\frac{z_1(k,s)^N}{(1-z_1(k,s))^{2\alpha+N}},
\end{align*}
where we have used the uniform upper bound on ultraspherical polynomials~\cite[18.14.4]{NIST:DLMF}, $|C_n^{(\lambda)}(x)|\le \frac{(2\lambda)_n}{n!}$ for $|x|\le1$. By the binomial inequality:
\[
\frac{(2\alpha)_N}{N!} = \binom{N+2\alpha-1}{N} \le (2\alpha e)^N,
\]
the absolute error is less than or equal to $\epsilon$ when:
\[
N = \log\left[\frac{\epsilon}{2}\left(\frac{(s-1)(1-z_1(k,s))^2}{4z_1(k,s)}\right)^\alpha\right]\Bigg/\log\left(\frac{2\alpha e z_1(k,s)}{1-z_1(k,s)}\right).
\]
At worst:
\[
z_1(k,s) \le z_1(s,s) = z(s) = \frac{s-1}{s+1+2\sqrt{s}}.
\]
Thus, if we take $N = r(\alpha, \epsilon, n)$, we have secured a degree-$(r-1)$ polynomial that absolutely approximates every entry in any fixed column of the subblock. The matrix $\tilde{W}_T$ is formed by the outer product of two matrices: the first contains evaluations of the first $r$ ultraspherical polynomials at the points $-1+2j/(s-1)$ for $0\le j<s$. The second contains the $r$ coefficients from the generating functions for each column $s\le k< n$. By construction, $\rank(\tilde{W}_T) = r$.
\end{proof}

Notice that since $1-z(\lfloor\frac{n}{2}\rfloor) = {\cal O}(1/\sqrt{n})$, it follows that $r = {\cal O}(\log n)$ as $n\to\infty$. While this bound on the rank is by no means sharp, the logarithmic growth with respect to $n$ is rigorously achieved, and we leave the computation of the best numerical approximation to the algorithms from randomized numerical linear algebra. It is also trivial to convert this into a relative bound on the whole subblock by depressing $\epsilon$. If the moments require an even-odd piecewise definition, such as in Eq.~\eqref{eq:ClenshawCurtismoments}, the numerical rank is at most the sum of the numerical ranks of both pieces.

Finally, we conclude with bounds on the modified Chebyshev moments based on the smoothness of the weight, similar in spirit to Trefethen's~\cite[Theorem 7.1]{Trefethen-12}. These bounds enrich the picture on the near-universal decay rates of modified Chebyshev moments. Decay rates alone, however, do not imply low-rank approximation as can be seen by taking pseudorandom coefficients scaled by a prescribed decay. As in Theorem~\ref{theorem:algebraicmomentsequalslowrank}, usually stronger assumptions on the modified moments are necessary, such as analyticity of an underlying function that is sampled discretely to produce the moment vector.

\begin{definition}
The total variation of a function $f:[a,b]\to\bbR$ is:
\[
V_a^b(f) = \sup_{P\in{\cal P}}\sum_{i=0}^{n_P-1} | f(x_{i+1}) - f(x_i)|,
\]
where ${\cal P} = \{P=(x_0,\ldots,x_{n_P}) : a=x_0<\cdots<x_{n_P}=b\}$.
\end{definition}

\begin{definition}
A function $f:[a,b]\to\bbR$ is said to be of {\em bounded variation} if its total variation is finite.
\end{definition}

\begin{theorem}
Let $w:[-1,1]\to\bbR$ be of bounded variation. Then for $n\ge2$, the modified Chebyshev moments of $w$ satisfy:
\begin{equation}\label{eq:mufirstbound}
\mu_n[w] \le \frac{2\norm{w}_\infty}{(n-1)^2} + \frac{V_{-1}^1(w)}{n-1}.
\end{equation}
Moreover, if $w$ is absolutely continuous on $[-1,1]$ and $w'$ is of bounded variation, then for $n\ge3$, the modified Chebyshev moments of $w$ satisfy:
\begin{equation}\label{eq:musecondbound}
\mu_n[w] \le \frac{2\norm{w}_\infty}{(n-1)^2} + \frac{2\norm{w'}_\infty}{(n-2)^3} + \frac{V_{-1}^1(w')}{(n-2)^2}.
\end{equation}
\end{theorem}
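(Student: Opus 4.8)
The plan is to exploit the fact that Chebyshev polynomials admit explicit antiderivatives that remain uniformly bounded on $[-1,1]$, and to transfer derivatives off the oscillatory factor $T_n$ and onto the weight $w$ by Riemann--Stieltjes integration by parts. Writing $x=\cos\theta$ and applying a product-to-sum identity, for $n\ge2$ one obtains the first antiderivative
\[
\Phi_n(x) = \frac12\left(\frac{T_{n+1}(x)}{n+1} - \frac{T_{n-1}(x)}{n-1}\right),\qquad \Phi_n'=T_n,
\]
with endpoint values $\Phi_n(1)=-1/(n^2-1)$ and $\Phi_n(-1)=(-1)^n/(n^2-1)$ (using $T_m(\pm1)=(\pm1)^m$), and satisfying $\norm{\Phi_n}_\infty\le \tfrac12\big(\tfrac1{n+1}+\tfrac1{n-1}\big)=n/(n^2-1)$.

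For Eq.~\eqref{eq:mufirstbound} I would integrate by parts once. Since $\Phi_n$ is a polynomial (hence continuous and of bounded variation) and $w$ is of bounded variation, Stieltjes integration by parts gives
\[
\mu_n[w] = \int_{-1}^1 w\,{\rm d}\Phi_n = \big[w\,\Phi_n\big]_{-1}^1 - \int_{-1}^1 \Phi_n\,{\rm d}w.
\]
Taking moduli, the boundary term is at most $2\norm{w}_\infty/(n^2-1)\le 2\norm{w}_\infty/(n-1)^2$ by the endpoint values above, while the Stieltjes integral is at most $\norm{\Phi_n}_\infty V_{-1}^1(w)\le V_{-1}^1(w)/(n-1)$, since $n/(n^2-1)\le 1/(n-1)$. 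Summing yields the first bound.

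For Eq.~\eqref{eq:musecondbound} I would integrate by parts a second time. Integrating $\Phi_n$ once more (legitimate only when $n-1\ge2$, i.e.\ $n\ge3$, which is where this hypothesis enters) produces
\[
\Psi_n(x) = \frac14\left[\frac1{n+1}\left(\frac{T_{n+2}}{n+2}-\frac{T_n}{n}\right) - \frac1{n-1}\left(\frac{T_n}{n}-\frac{T_{n-2}}{n-2}\right)\right],\quad \Psi_n'=\Phi_n.
\]
A short calculation gives $\Psi_n(-1)=(-1)^n\Psi_n(1)$ with $|\Psi_n(1)|=3/\big[(n-1)(n-2)(n+1)(n+2)\big]$, and the triangle inequality across the four Chebyshev terms yields $\norm{\Psi_n}_\infty\le 1/\big[(n-1)(n-2)\big]\le 1/(n-2)^2$. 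When $w$ is absolutely continuous I would replace ${\rm d}w$ by $w'\,{\rm d}x$ and integrate against $\Psi_n$, arriving at
\[
\mu_n[w] = \big[w\,\Phi_n\big]_{-1}^1 - \big[w'\,\Psi_n\big]_{-1}^1 + \int_{-1}^1 \Psi_n\,{\rm d}w'.
\]
The first boundary term is as before; the second is at most $2\norm{w'}_\infty|\Psi_n(1)|\le 2\norm{w'}_\infty/(n-2)^3$, where the elementary inequality $3(n-2)^2\le(n^2-1)(n+2)$ for $n\ge3$ absorbs the considerable slack; and the final Stieltjes integral is at most $\norm{\Psi_n}_\infty V_{-1}^1(w')\le V_{-1}^1(w')/(n-2)^2$. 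This establishes the second bound.

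The computations are elementary, so the chief difficulty is bookkeeping: deriving the closed forms of $\Phi_n$ and $\Psi_n$, evaluating them at $\pm1$, and checking that each of the three error contributions is dominated by the stated rational function of $n$. The one genuinely analytic point requiring care is the validity of Stieltjes integration by parts in the bounded-variation setting; this is justified because $\Phi_n$ and $\Psi_n$ are smooth and therefore share no discontinuities with $w$ or $w'$, so the relevant Riemann--Stieltjes integrals exist and the boundary formula applies.
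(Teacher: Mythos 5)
Your proof is correct, and it rests on the same engine as the paper's: the Chebyshev antiderivative $\Phi_n=\tfrac12\bigl(\tfrac{T_{n+1}}{n+1}-\tfrac{T_{n-1}}{n-1}\bigr)$ and integration by parts, so your first bound is identical in substance to the paper's. The second bound is where you take a mildly different route. The paper stops after one integration by parts, reads off the identity
\begin{equation*}
\mu_n[w] = \frac{w(1)+(-1)^nw(-1)}{1-n^2} - \frac{1}{2}\left(\frac{\mu_{n+1}[w']}{n+1} - \frac{\mu_{n-1}[w']}{n-1}\right),
\end{equation*}
and then recycles the first bound on the two moments of $w'$; you instead construct the second antiderivative $\Psi_n$ and integrate by parts twice in one pass. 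These are algebraically the same decomposition --- indeed $\Psi_n=\tfrac12\bigl(\tfrac{\Phi_{n+1}}{n+1}-\tfrac{\Phi_{n-1}}{n-1}\bigr)$, and your observation that $n-1\ge2$ is needed to antidifferentiate $T_{n-1}$ is exactly the paper's requirement that the first bound apply to $\mu_{n-1}[w']$ --- so the constants come out the same either way; the paper's version is shorter because it reuses a proved estimate, while yours is self-contained and makes the endpoint values $|\Psi_n(\pm1)|=3/[(n-1)(n-2)(n+1)(n+2)]$ explicit. A second, worthwhile difference is your Riemann--Stieltjes framing: you bound $\bigl|\int_{-1}^1\Phi_n\,{\rm d}w\bigr|\le\norm{\Phi_n}_\infty V_{-1}^1(w)$ directly, whereas the paper writes $\int_{-1}^1\Phi_n w'\,{\rm d}x$ with $w'$ interpreted distributionally and $\norm{w'}_1=V_{-1}^1(w)$ (and likewise $\norm{w''}_1=V_{-1}^1(w')$ in the second bound). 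The two are equivalent, but your phrasing handles a merely-BV weight without appealing to distribution theory and places each hypothesis exactly where it is used; your supporting inequalities (e.g.\ $3(n-2)^2\le(n^2-1)(n+2)$ for $n\ge3$, and the four-term triangle-inequality bound $\norm{\Psi_n}_\infty\le1/[(n-1)(n-2)]$) all check out.
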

\begin{proof}
By definition:
\begin{align*}
\mu_n[w] & = \int_{-1}^1 T_n(x) w(x){\rm\,d}x,\\
& = \left.\frac{w(x)}{2}\left(\frac{T_{n+1}(x)}{n+1}-\frac{T_{n-1}(x)}{n-1}\right)\right|_{-1}^1 - \frac{1}{2}\int_{-1}^1\left(\frac{T_{n+1}(x)}{n+1}-\frac{T_{n-1}(x)}{n-1}\right)w'(x){\rm\,d}x,\\
& = \frac{w(1)+(-1)^nw(-1)}{2}\frac{2}{1-n^2} - \frac{1}{2}\int_{-1}^1\left(\frac{T_{n+1}(x)}{n+1}-\frac{T_{n-1}(x)}{n-1}\right)w'(x){\rm\,d}x.
\end{align*}
Then~\eqref{eq:mufirstbound} follows from:
\[
\mu_n[w] \le \frac{2\norm{w}_\infty}{n^2-1} + \frac{n\norm{w'}_1}{n^2-1},
\]
where the derivative of $w$ may be interpreted in a distributional sense if necessary~\cite[Chapter 5]{Ziemer-89}.
As for~\eqref{eq:musecondbound}, since:
\[
\mu_n[w'] \le \frac{2\norm{w'}_\infty}{n^2-1} + \frac{n\norm{w''}_1}{n^2-1},
\]
and:
\[
\mu_n[w] = \frac{w(1)+(-1)^nw(-1)}{2}\frac{2}{1-n^2} - \frac{1}{2}\left(\frac{\mu_{n+1}[w']}{n+1} - \frac{\mu_{n-1}[w']}{n-1}\right),
\]
it follows that:
\begin{align*}
\mu_n[w] & \le \frac{2\norm{w}_\infty}{n^2-1} + \frac{\norm{w'}_\infty}{(n+1)[(n+1)^2-1]} + \frac{\norm{w'}_\infty}{(n-1)[(n-1)^2-1]},\\
& \quad + \frac{n+1}{(n+1)^2-1}\norm{w''}_1\frac{1}{2(n+1)} + \frac{n-1}{(n-1)^2-1}\norm{w''}_1\frac{1}{2(n-1)},\\
& \le \frac{2\norm{w}_\infty}{n^2-1} + \frac{2\norm{w'}_\infty}{(n-1)[(n-1)^2-1]} + \frac{\norm{w''}_1}{(n-1)^2-1},\\
& \le \frac{2\norm{w}_\infty}{(n-1)^2} + \frac{2\norm{w'}_\infty}{(n-2)^3} + \frac{\norm{w''}_1}{(n-2)^2}.
\end{align*}

\end{proof}

\end{document}